\theoremstyle{plain}
\newtheorem{theorem}{Theorem}[section]
\newtheorem{lemma}[theorem]{Lemma}
\newtheorem{corollary}[theorem]{Corollary}
\theoremstyle{remark}
\newtheorem{remark}[theorem]{Remark}
\numberwithin{equation}{section}
\newcommand{\R}{\mathbb{R}}
\newcommand{\Z}{\mathbb{Z}}
\newcommand{\F}{\mathcal{F}}
\newcommand{\I}{\infty}
\newcommand{\norm}[1]{\left\lVert #1\right\rVert}
\newcommand{\xLebn}[2]{\left\lVert #1 \right\rVert_{L^{#2}_x}}
\newcommand{\Lebn}[2]{\left\lVert #1 \right\rVert_{L^{#2}}}
\newcommand{\xTLebn}[3]{\left\lVert #1 \right\rVert_{L^{#2}_x L^{#3}_T}}
\newcommand{\TxLebn}[3]{\left\lVert #1 \right\rVert_{L^{#2}_T L^{#3}_x}}
\newcommand{\xtLebn}[3]{\left\lVert #1 \right\rVert_{L^{#2}_x L^{#3}_t}}
\newcommand{\txLebn}[3]{\left\lVert #1 \right\rVert_{L^{#2}_t L^{#3}_x}}
\newcommand{\Sobn}[2]{\left\lVert #1 \right\rVert_{H^{#2}}}
\newcommand{\Jbr}[1]{\left\langle #1 \right\rangle}
\def\({\left(}
\def\){\right)}
\def\<{\left\langle}
\def\>{\right\rangle}
\def\le{\leqslant}
\def\ge{\geqslant}
\def\d{{\partial}}
\def \f{\phi}
\def \l{\lambda}
\def \d{\delta}
\def \pa{\partial}
\def \s{\sigma}
\def \a{\alpha}
\def \b{\beta}
\def \t{\theta}
\def \P{\Phi}
\def \F{\mathcal{F}}
\def \ga{\gamma}
\newcommand{\be}{\beta}
\newcommand{\ka}{\kappa}
\newcommand{\la}{\lambda}
\newcommand{\ti}{\widetilde}
\begin{document}
\title[LWP for the higher-order generalized KdV type equation]
{Local well-posedness for the higher-order generalized KdV type equation with low-degree of nonlinearity}

\author[H. Miyazaki]{Hayato MIYAZAKI}
\address[]{Teacher Training Courses, Faculty of Education, Kagawa University, Takamatsu, Kagawa 760-8522, Japan}
\email{miyazaki.hayato@kagawa-u.ac.jp}

\keywords{higher-order KdV equation, well-posedness}
\subjclass[2010]{35A01, 35Q53}
\date{}

\begin{abstract}
This paper is concerned with the local well-posedness for the higher-order generalized KdV type equation with low-degree of nonlinearity. The equation arises as a non-integrable and lower nonlinearity version of the higher-order KdV equation. As for the lower nonlinearity model of the KdV equation, Linares, the author and Ponce \cite{LMP} prove the local well-posedness under a non-degenerate condition introduced by Cazenave and Naumkin \cite{CaNa}. 
In this paper, it turns out that the well-posedness result can be extended into the higher-order equation. We also give a lower bound for the lifespan of the solution. The lifespan depends on two quantities determined by the initial data.  
\end{abstract}

\maketitle

\section{Introduction}
In this paper we consider the Cauchy problem for the higher-order generalized Korteweg-de Vries (KdV) type equation
\begin{align}
	\begin{cases}
	\pa_t u + \pa_x^{2j+1} u \pm |u|^{\a}\pa_x^{2j-1} u =0,\; (t, x) \in \R^{+} \times \R, \\
	u(x,0) = u_0(x),\; x \in \R,
	\end{cases}
	\label{hkdv} \tag{HK}
\end{align}
where $u=u(x,t)$ is a complex-valued unknown function, $\a \in (0,1)$ and $j \in \Z^{+}$. 
The equation \eqref{hkdv} appears as a non-integrable and lower nonlinearity version of the higher order KdV equation
\begin{align}
	\pa_t u + \pa_x^{2j+1} u + c_j u \pa_x^{2j-1} u + P(u, \ldots, \pa_x^{2j-2} u) =0 \label{kdv}
\end{align}
corresponding to the KdV hierarchy introduced by Lax \cite{La},
where $c_j \in \R \setminus \{0\}$, $j \in \Z^+$ and $P$ is a certain polynomial (see also \cite{KPV3}). For instance, the KdV equation and the fifth order KdV equation can be described as
\begin{align}
	&{}\pa_t u + \pa_x^{3} u + u \pa_x u = 0, \label{3kdv} \\
	&{}\pa_t u + \pa_x^{5} u - 10u \pa_x^{3} u +30u^2 \pa_x u -20\pa_x u \pa_x^2 u =0, \label{5kdv}
\end{align}
respectively. These equation arise as various physical phenomena such as long wave propagating in a channel and the interaction effects between short and long waves.
The KdV equation \eqref{3kdv}
on the line and the torus has been studied in huge mathematical and physical literatures. We refer to chapter 7-8 in \cite{LP} where we can summarize a lot of results for sharp local and global well-posedness, stability of special solutions, existence of blow-up solutions, and so on.
In the fifth order KdV equation \eqref{5kdv}, Ponce \cite{P} prove the local well-posedness in $L^2$-based Sobolev space $H^s(\R)$, $s \ge 4$. Later on, 
Guo, Kwak and Kwon \cite{GKK}, and Kenig and Pilod \cite{KP1}, independently improve the local well-posedness for $s \ge 2$. 
For more higher equation \eqref{kdv}, Kenig, Ponce and Vega \cite{KPV3, KPV4} prove the local well-posedness in the weighted Sobolev spaces $H^s(\R)\cap L^2(\Jbr{x}^r dx)$ for some $s$, $r \in \Z^+$ under more general nonlinearity. Further, Kenig and Pilod \cite{KP2} establish the local well-posedness in $H^s$ on the line and the torus at sufficiently large $s$.

On the other hand, in \cite{LMP}, Linares, the author and Ponce consider the lower nonlinearity model of \eqref{3kdv} as follows:
\begin{align}
	\pa_t u + \pa_x^3 u  \pm |u|^{\a}\pa_x u = 0,\; \a \in (0,1). \label{gkdv}
\end{align}
We emphasize that it is difficult to prove the local well-posedness for \eqref{gkdv} in $H^s(\R)$ or $H^s(\R)\cap L^2(\Jbr{x}^r dx)$ as long as we only employ the contraction principle, because the nonlinearity of \eqref{gkdv} is not Lipschitz continuous in those spaces.
Nevertheless, in \cite{LMP}, they establish the local well-posedness for \eqref{gkdv} in an appropriate class under a non-degenerate condition
\[
	\inf_{x \in \R} \Jbr{x}^{m(\a)} |u_0(x)| >0
\]
introduced by Cazenave and Naumkin \cite{CaNa} (see \eqref{thm:15}).
Our purpose of this paper is to extend their result into the case of \eqref{hkdv}. The main result is the following:
\begin{theorem}[Local well-posedness] \label{thm:1}
Denote $m= \left[ \frac{1}{\a} \right]+1$. Let $s \in \Z^{+}$ satisfy $s - j+1 \ge 2jm+2j+2$. 
Assume that  
\begin{align}
	\begin{aligned}
	&{}u_0 \in H^s(\R), \quad \Jbr{x}^m u_0 \in L^{\I}(\R), \\
	&{}\Jbr{x}^m \pa_x^{\ga} u_0 \in L^{2}(\R), \quad \ga = 1, \cdots, 2j+2
	\end{aligned}
	\label{thm:14}
\end{align}
with 
\begin{align}
	&{}\Sobn{u_0}{s} + \Lebn{\Jbr{x}^m u_0}{\I} + \sum_{\ga =1}^{2j+2} \norm{\Jbr{x}^{m} \pa^{\ga}_{x} u_0}_{L^2} =:\d, 
	\label{thm:16}
\end{align}
and
\begin{align}
	\inf_{x \in \R} \Jbr{x}^m |u_0(x)| =: \l >0. \label{thm:15}
\end{align}
Then there exists $T=T(\d, \l; \a, s, j)>0$ such that \eqref{hkdv} has a unique local solution
\begin{align}
	u \in C([0,T], H^s(\R)),\quad \Jbr{x}^m u \in C([0,T], L^\I(\R)) \label{thm:12}
\end{align}
with
\begin{align}
	\begin{aligned}
	&{}\Jbr{x}^m \pa_x^{\ga} u \in C([0,T], L^2(\R)),\; \ga = 1, \ldots, 2j+2, \\
	&{}\pa_x^{s+j-l} u \in L^{\infty}(\R, L^{2}([0,T])),\; l = 0, 1, \ldots, j-1, 
	\end{aligned}
	\label{thm:13}
\end{align}
and
\begin{align*}
\sup_{0 \le t \le T} \Lebn{\Jbr{x}^m (u(t)-u_0)}{\I} \le \frac{\l}2.
\end{align*}
Moreover, the map $u_0 \mapsto u(t)$ is continuous in the following sense:
For any compact $I \subset [0,T]$, there exists a neighborhood $V$ of $u_0$ satisfying  \eqref{thm:14} and \eqref{thm:15} such that the map 
is Lipschitz continuous from $V$ into the class defined by \eqref{thm:12} and \eqref{thm:13}.
\end{theorem}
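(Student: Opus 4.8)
The plan is to follow the regularization scheme of \cite{CaNa, LMP}. One constructs $u$ by regularizing the nonlinearity, proves a priori estimates uniform in the regularization parameter, and passes to the limit; the structural fact that makes everything work is that the non-degeneracy \eqref{thm:15} propagates in time, so that $|u(t,x)|\gtrsim\l\Jbr{x}^{-m}$ on a short interval $[0,T]$, which keeps the otherwise non-Lipschitz factor $|u|^{\a}$ under quantitative control. Concretely, I would fix a smooth, globally Lipschitz $f_{\e}\colon\C\to\R$ with $f_{\e}(u)=|u|^{\a}$ for $|u|\ge\e$ and study
\[
\pa_t u_{\e}+\pa_x^{2j+1}u_{\e}\pm f_{\e}(u_{\e})\,\pa_x^{2j-1}u_{\e}=0,\qquad u_{\e}(0)=u_0 .
\]
Since $f_{\e}$ is smooth and the top derivative in the nonlinearity is of order $2j-1<2j+1$, the energy method for higher-order KdV-type equations (in the spirit of \cite{KPV3, KPV4}, after an auxiliary parabolic regularization if convenient) yields, for each $\e>0$, a local solution $u_{\e}$ in the class \eqref{thm:12}--\eqref{thm:13}; the real work is the $\e$-uniform control of $u_{\e}$.

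That control I would obtain as a single bootstrap on a maximal subinterval of $[0,T]$ for the four quantities: (a) $\Sobn{u_{\e}(t)}{s}$; (b) the weighted norms $\norm{\Jbr{x}^{m}\pa_x^{\ga}u_{\e}(t)}_{L^2}$, $\ga=1,\dots,2j+2$, which by Sobolev embedding also control $\Lebn{\Jbr{x}^{m}u_{\e}(t)}{\I}$ and $\Lebn{\Jbr{x}^{m}\pa_x^{2j+1}u_{\e}(t)}{\I}$; (c) the Kato smoothing norms $\xTLebn{\pa_x^{s+j-l}u_{\e}}{\I}{2}$, $l=0,\dots,j-1$; and (d) $\sup_{[0,T]}\Lebn{\Jbr{x}^{m}(u_{\e}(t)-u_0)}{\I}$, which must remain $\le\l/2$. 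In the $H^{s}$ estimate, after applying $\pa_x^{s}$ and pairing with $\pa_x^{s}u_{\e}$, the one top-order term not absorbed at once is $\int f_{\e}(u_{\e})\,\pa_x^{2j-1}(\pa_x^{s}u_{\e})\,\pa_x^{s}u_{\e}\,dx$; since $2j-1$ is odd and $f_{\e}$ is real, integrating by parts $j-1$ times bounds it by $\norm{\pa_x f_{\e}(u_{\e})}_{L^1_xL^{\I}_T}$ times the square of a norm already in (a)--(c), plus lower-order terms. The decisive point --- the reason for the choice $m=[1/\a]+1$ --- is that on $[0,T]$, \eqref{thm:15} and (d) give $|u_{\e}|^{\a-2}\lesssim\l^{\a-2}\Jbr{x}^{m(2-\a)}$ while (b) gives $|u_{\e}|,|\pa_x u_{\e}|\lesssim\Jbr{x}^{-m}$, so that
\[
\big|\pa_x(|u_{\e}|^{\a})\big|=\a\,|u_{\e}|^{\a-2}\,\big|\Re(\overline{u_{\e}}\,\pa_x u_{\e})\big|\lesssim\l^{\a-2}\Jbr{x}^{-m\a},
\]
which lies in $L^1_x$ precisely because $m\a>1$ (the same bound also closes (c), via $|u_{\e}|^{\a}\lesssim\Jbr{x}^{-m\a}\in L^1_x$). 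The higher-weight estimates in (b) and all remaining terms are handled the same way --- the weight commuted through $\pa_x^{2j+1}$, and the many commutator and high-derivative pieces absorbed by interpolating the weighted and $H^{s}$ bounds --- which is where the room $s-j+1\ge2jm+2j+2$ is consumed. Finally (d) follows from $\Lebn{\Jbr{x}^{m}(u_{\e}(t)-u_0)}{\I}\le\int_0^t\Lebn{\Jbr{x}^{m}\pa_t u_{\e}}{\I}\,dt'$ together with $\Jbr{x}^{m}\pa_t u_{\e}=-\Jbr{x}^{m}\pa_x^{2j+1}u_{\e}\mp\Jbr{x}^{m}f_{\e}(u_{\e})\,\pa_x^{2j-1}u_{\e}$ being $L^{\I}$-bounded uniformly on $[0,T]$ by (a)--(c) (using $\Jbr{x}^{m}f_{\e}(u_{\e})\lesssim\Jbr{x}^{m}|u_{\e}|^{\a}\lesssim1$), so the left-hand side is $O(T)$ and is $\le\l/2$ once $T=T(\d,\l)$ is small; since these constants degenerate as $\l\downarrow0$, $T$ genuinely depends on $\l$.

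With the $\e$-uniform bounds in hand, I would extract a subsequence of $(u_{\e})$ converging --- weak-$\ast$ in the energy and weighted spaces, and strongly on compact subsets of $(t,x)$ by Aubin--Lions and Arzel\`{a}--Ascoli, the smoothing and weighted bounds supplying the local compactness --- to a limit $u$ in the class \eqref{thm:12}--\eqref{thm:13} with $\sup_{[0,T]}\Lebn{\Jbr{x}^{m}(u(t)-u_0)}{\I}\le\l/2$; since $f_{\e}\to|\cdot|^{\a}$ locally uniformly and $|u|\gtrsim\l\Jbr{x}^{-m}>0$, the limit solves \eqref{hkdv}, and upgrading weak to strong time-continuity is a standard Bona--Smith argument. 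For uniqueness, two solutions $u_{1},u_{2}$ in this class both obey $|u_{i}|\gtrsim\l\Jbr{x}^{-m}$, so $\big||u_{1}|^{\a}-|u_{2}|^{\a}\big|\lesssim\l^{\a-1}\Jbr{x}^{m(1-\a)}|u_{1}-u_{2}|$; then $w=u_{1}-u_{2}$ solves a linear higher-order KdV equation forced by $\mp(|u_{1}|^{\a}-|u_{2}|^{\a})\pa_x^{2j-1}u_{2}$, and an $L^2$ energy estimate, one weighted estimate (which absorbs $\Jbr{x}^{m(1-\a)}$ thanks to $\Jbr{x}^{m}\pa_x^{2j-1}u_{2}\in L^2\cap L^{\I}$), and the smoothing effect close a Gronwall inequality forcing $w\equiv0$. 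Continuous dependence follows by running the same difference estimates for solutions issued from data $u_{0,n}\to u_0$ in the topology of \eqref{thm:14}--\eqref{thm:15} (again with a Bona--Smith regularization to cover the apparent derivative loss in the quasilinear term), which gives the asserted Lipschitz continuity on any compact $I\subset[0,T]$.

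The hardest part will be closing the circular coupling of (a)--(d): the energy and weighted estimates require $\pa_x(|u|^{\a})\in L^1_xL^{\I}_T$ and its higher-order analogues, which rest on both the weighted bounds and the lower bound $|u|\gtrsim\l\Jbr{x}^{-m}$, the latter coming from (d), which itself consumes (a)--(c); disentangling this circle on a short interval is exactly what forces the $\l$-dependent lifespan and the size threshold $s-j+1\ge2jm+2j+2$. A secondary delicate point is uniqueness and continuous dependence: the absence of global Lipschitz continuity of $|u|^{\a}$ is circumvented only via the quantitative non-degeneracy, and at the price of carrying weighted norms through every difference estimate.
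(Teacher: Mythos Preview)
Your proposal is workable in outline but takes a genuinely different route from the paper. The paper does \emph{not} regularize: it runs a direct contraction-mapping argument on the Duhamel map
\[
\Phi(u)(t)=U_j(t)u_0\mp\int_0^t U_j(t-s)\bigl(|u|^{\a}\pa_x^{2j-1}u\bigr)(s)\,ds
\]
in a complete metric space $X_{T,M}$ whose norm is exactly the sum of your quantities (a)--(c), with the non-degeneracy condition (d) built in as a membership constraint. Inside that ball every $u$ satisfies $\Jbr{x}^{m}|u|\ge\l/2$, and the nonlinear estimates of Section~3 (proved via Leibniz, the lower bound $|u|^{\a-n}\lesssim\l^{\a-n}\Jbr{x}^{m(n-\a)}$, and the interpolation Lemma~2.2) combine with the linear smoothing Lemma~2.1 and the weighted commutation Lemma~2.3 to show $\Phi$ is a contraction for $T$ small in terms of $\d,\l$. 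Existence, uniqueness, and local Lipschitz dependence then come out simultaneously. (Incidentally, the works \cite{CaNa,LMP} you cite for a ``regularization scheme'' in fact also use contraction, not regularization.)

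Your energy/compactness approach buys robustness --- it does not rely on the nonlinearity being tractable in Duhamel form --- but at a cost: you need a separate local existence theory for the regularized problem, uniform a~priori bounds, a compactness passage, a Gronwall uniqueness argument, and a Bona--Smith step for continuous dependence, whereas contraction delivers all of these at once. One point you leave underspecified is how the smoothing norms (c) enter your bootstrap: differential energy identities in $t$ do not by themselves produce $L^{\infty}_xL^{2}_T$ control of $\pa_x^{s+j}u$; you would need either a local-smoothing multiplier identity or the Duhamel smoothing estimate of Lemma~2.1, the latter of which essentially merges with the paper's method. The structural insights --- that $m\a>1$ places $\Jbr{x}^{-m\a}$ in $L^1_x$, that the lower bound on $|u|$ converts $|u|^{\a-n}$ into polynomial weights, and that $s-j+1\ge 2jm+2j+2$ is what absorbs the weight/derivative commutators --- are the same in both approaches.
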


\begin{remark}
The non-degenerate condition \eqref{thm:15} comes from that $|u|^{\a}$ is not regular enough (only $C^{\a}$).
The condition \eqref{thm:15} enables us to carry out the contraction principle. 
This approach is firstly introduced to a nonlinear Schr\"odinger equation by \cite{CaNa}. Later on, it is applied to the derivative nonlinear Schr\"odinger equation with low-degree of nonlinearity by Linares, Ponce and Santos \cite{LPS, LPS2}, as well as \cite{LMP} for \eqref{gkdv}.
\end{remark}

\begin{remark}
We employ the Kato smoothing effect (Lemma \ref{lem:2} below) to remove derivative loss of the nonlinearity. 
This provides us the additional regularity $\pa_x^{s+j-l} u \in L^{\infty}(\R, L^{2}([0,T]))$ for any $0 \le l \le j-1$ in \eqref{thm:13}. 
\end{remark}

\begin{remark}
The regularity condition $s - j+1 \ge 2jm+2j+2$ arises from the estimate of the following norm:
\[
	\TxLebn{\Jbr{x}^m \pa_x^{2j+2}(|u|^{\a} \pa_x^{2j-1}u)}{1}{2}
\]
using the relation $e^{t\pa_x^{2j+1}} x^m e^{-t\pa_x^{2j+1}} = (x+(2j+1)t\pa_x^{2j})^m$ exhibited in Section \ref{sec:2}. In detail, see the above estimate of \eqref{est:in}.
\end{remark}

We here define the lifespan of the solution to \eqref{hkdv} by 
\begin{align*}
	T_{\d, \la} := \sup \{T \in (0, \I];\ &{} \text{there exists a unique solution to \eqref{hkdv}} \\
	&{} \qquad \qquad \text{in the class given by Theorem \ref{thm:1}} \},
\end{align*}
where $\d$ and $\l$ are defined by \eqref{thm:16} and \eqref{thm:15}.
Once the local well-posedness is established by the contraction principle, we have a lower bound estimate for the lifespan of the solution.
\begin{corollary}[Lower bound for the lifespan] \label{cor:1}
Let $\d$ be as in \eqref{thm:16}. Define $\l$ by \eqref{thm:15}.
Under the same assumptions as in Theorem \ref{thm:1}, there exists a constant $C \in (0,1)$ such that
\begin{align*}
	T_{\d,\l}^{\frac{1}{2j}} \ge{}& \frac{C \l}{\d \( 1 + \d^{\ka} \( \frac{1+\la}{\la} \)^{s-j+2-\a} \)},
\end{align*}
where $\kappa = s-j+2$ if $\d \ge 1$, otherwise $\kappa = \a$. 
\end{corollary}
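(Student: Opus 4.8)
The plan is to extract the lifespan bound directly from the contraction-mapping scheme used to prove Theorem \ref{thm:1}, by tracking how the closed ball on which the nonlinear map acts depends on the time $T$ and on the data quantities $\d$ and $\l$. Concretely, the fixed-point argument produces, for a suitable choice of radius $R = R(\d)$ and time $T$, a map $\Phi$ on a ball of radius $R$ (measured in the product of the norms appearing in \eqref{thm:12}--\eqref{thm:13}) which is both a self-map and a contraction provided two conditions hold: a smallness condition of the form $T^{1/(2j)}\,\mathcal{N}(\d,\l) \le c_0$ coming from the nonlinear estimate (in particular from the estimate of $\TxLebn{\Jbr{x}^m \pa_x^{2j+2}(|u|^{\a}\pa_x^{2j-1}u)}{1}{2}$ alluded to in the third remark), and the displacement bound $\sup_{0\le t\le T}\Lebn{\Jbr{x}^m(u(t)-u_0)}{\I}\le \l/2$ which is what guarantees the non-degeneracy $\inf_x \Jbr{x}^m|u(x,t)|\ge \l/2$ is preserved and hence that $|u|^\a$ stays Lipschitz along the iteration. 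So the first step is to reread the proof of Theorem \ref{thm:1} and isolate the explicit inequality that $T$ must satisfy for both conditions to hold.

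The second step is to compute the quantity $\mathcal{N}(\d,\l)$ explicitly. The nonlinear term $|u|^\a \pa_x^{2j-1}u$ contributes, after applying the weight $\Jbr{x}^m$, the operator identity $e^{t\pa_x^{2j+1}}x^m e^{-t\pa_x^{2j+1}} = (x+(2j+1)t\pa_x^{2j})^m$, the Kato smoothing estimate (Lemma \ref{lem:2}), and the elementary bound $\big|\,|u|^\a - |v|^\a\big| \lesssim (\min\{|u|,|v|\})^{\a-1}|u-v|$, a factor of $\l^{\a-1}$ (equivalently $\l^{-(1-\a)}$) from the lowest power of $|u|$, multiplied by polynomial factors in $\d$ coming from the $H^s$-type norms of $u$ and its weighted derivatives; tracking the highest-order term, which carries $s-j+2$ derivatives relative to the base quantity, gives the exponent $s-j+2-\a$ on $\Jbr{x}^{-m}$-weighted factors and hence the expression $\d^{\ka}\big(\tfrac{1+\l}{\l}\big)^{s-j+2-\a}$, where one keeps $\d^{\ka}$ with $\ka = s-j+2$ when $\d\ge 1$ (so the top-degree term dominates) and $\ka=\a$ when $\d<1$ (so the lowest-degree term dominates). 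Adding the linear-in-$\d$ contribution from the free evolution and homogeneous parts yields a bound of the shape $\mathcal{N}(\d,\l) \lesssim \d\big(1+\d^{\ka}(\tfrac{1+\l}{\l})^{s-j+2-\a}\big)$.

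The third step is purely algebraic: from $T^{1/(2j)}\mathcal{N}(\d,\l)\le c_0$ one solves for $T^{1/(2j)}$ to get
\[
	T^{1/(2j)} \ge \frac{c_0}{\mathcal{N}(\d,\l)} \ge \frac{C\l}{\d\big(1+\d^{\ka}\big(\tfrac{1+\l}{\l}\big)^{s-j+2-\a}\big)},
\]
after absorbing constants; one checks that the displacement condition $\sup_t\Lebn{\Jbr{x}^m(u(t)-u_0)}{\I}\le\l/2$ is implied by the same smallness of $T^{1/(2j)}\mathcal{N}$ (possibly after shrinking $c_0$), since that displacement is itself estimated by $T^{1/(2j)}$ times a subset of the same norms. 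Since by definition $T_{\d,\l}$ is the supremum of admissible existence times, any such $T$ is a lower bound for $T_{\d,\l}$, giving the claimed inequality with some $C\in(0,1)$.

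The main obstacle is bookkeeping rather than conceptual: one must go back through the multilinear/weighted estimates in the proof of Theorem \ref{thm:1} and verify that every appearance of $\l$ enters only through the single negative power needed to make $|u|^\a$ Lipschitz (so that the $\l$-dependence is exactly $\big(\tfrac{1+\l}{\l}\big)^{s-j+2-\a}$ and not worse), and that the $\d$-dependence is genuinely polynomial with the stated dominant exponent in each regime $\d\ge1$ and $\d<1$. In particular one should double-check that the Kato-smoothing gain does not consume a power of $T$ that would change the exponent $1/(2j)$, and that the weighted $L^2$ norms of $\pa_x^\ga u_0$ for $\ga\le 2j+2$ do not force an extra factor of $\l$. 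Once these dependencies are pinned down, the corollary follows immediately from the contraction estimates with no further work.
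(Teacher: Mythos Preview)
Your plan is essentially the paper's own argument: read off the three smallness conditions \eqref{ls:1}, \eqref{time:2}, \eqref{lsd:1} from the contraction proof of Theorem~\ref{thm:1}, pick the largest $T$ satisfying all of them, and observe that $T_{\d,\l}$ dominates this $T$. The paper does exactly this in two lines, defining $T_1^{1/(2j)}$ by the formula in the statement and checking it satisfies the constraints.

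One point of your sketch is slightly off and worth correcting. You write that the displacement condition $\sup_t\Lebn{\Jbr{x}^m(u(t)-u_0)}{\I}\le\l/2$ ``is implied by the same smallness of $T^{1/(2j)}\mathcal N$ (possibly after shrinking $c_0$)''. That is not quite right: shrinking a universal constant cannot absorb the $\l$-dependent right-hand side. The displacement bound \eqref{time:2} reads roughly $T^{1/2}\,\d\big(1+(\tfrac{1+\l}{\l})^{s-j+1-\a}(\d^\a+\d^{s-j+1})\big)\le \l/2$, and it is \emph{this} condition---not the contraction conditions \eqref{ls:1}, \eqref{lsd:1}, whose right-hand sides are absolute constants---that puts the factor $\l$ in the numerator of the final lower bound. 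In other words, the contraction conditions alone would only give $T^{1/(2j)}\gtrsim \big(\d^\kappa(\tfrac{1+\l}{\l})^{s-j+2-\a}\big)^{-1}$; the sharper form with $C\l/\d$ in front comes from also enforcing \eqref{time:2}. Once you fix this attribution, your third ``algebraic'' step goes through (note also that the inequality you wrote is reversed: from $T^{1/(2j)}\mathcal N\le c_0$ one chooses $T^{1/(2j)}=c_0/\mathcal N$, and then $T_{\d,\l}^{1/(2j)}\ge T^{1/(2j)}$ by the definition of the lifespan).
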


\begin{remark}
Note that $\d > \la$.
In the case $j=1$, Corollary \ref{cor:1} was proven in \cite{M}. However it is necessary that the lower bound in \cite{M} is corrected slightly, because it is required to take an estimate as in \eqref{lsd:1} into account.
\end{remark}

The rest of the paper is organized as follows:
In Section \ref{sec:2}, we collect some estimates for the linear evolution operator and an interpolation inequality.
Section \ref{sec:3} is devoted to some nonlinear estimates playing a crucial role in Section \ref{sec:4}.
We finally prove Theorem \ref{thm:1} and Corollary \ref{cor:1} in Section \ref{sec:4}.

We here introduce several notations used throughout this paper.

\noindent
\textbf{Notations:}
We set $\Jbr{x} = (1+|x|^2)^{\frac12}$ and $[x]$ denotes the greatest integer less than or equal to $x$ for any $x \in \R$. 
For any $q$, $r \ge 1$, We denote $\txLebn{F}{q}{r} = \| \norm{F}_{L^r_x (\R)} \|_{L^q_t(\R)}$ and $\xtLebn{F}{r}{q} = \| \norm{F}_{L^q_t(\R)} \|_{L^r_x(\R)}$. 
We also define $\TxLebn{F}{q}{r} = \| \norm{F}_{L^r_x (\R)} \|_{L^q_t([0,T])}$ and $\xTLebn{F}{r}{q} = \| \norm{F}_{L^q_t([0,T])} \|_{L^r_x(\R)}$. $\F$ stands for the usual Fourier transform on $\R$. Let $U_j(t) = e^{-t \pa_x^{2j+1}}$ be the linear evolution operator defined by $U_{j}(t) = \F^{-1} e^{-t(i\xi)^{2j+1}} \F$ for any $j \in \Z^+$.

\section{Preliminary} \label{sec:2}

We start this section presenting some linear estimates. The first one is concerning the sharp Kato smoothing effect found in \cite{KPV2, KPV}.

\begin{lemma}[\cite{KPV2, KPV4}] \label{lem:2}
Let $j \in \Z^+$. The following estimates hold:
\begin{align*}
	&{} \xtLebn{\pa_x^{j} U_{j}(t) \f}{\I}{2} \le C \xLebn{\f}{2}, \\ 
	&{} \txLebn{\pa_x^{\s} \int_{\R} U_{j}(t-s) f(s) ds}{\I}{2} \le C T^{\a} \xtLebn{f}{p}{2}, \\ 
	&{} \xTLebn{\pa_x^{j+\s} \int_{\R} U_{j}(t-s) f(s) ds}{\I}{2} \le C T^{\a} \xTLebn{f}{p}{2} 
\end{align*} 
for any $\s=0, \cdots, j$, where $\a = (j-\s)/2j$ and $p= 2j/(j+\s)$.
\end{lemma}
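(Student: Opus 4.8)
The plan is to treat the first estimate (the sharp homogeneous Kato smoothing) as the fundamental input, and to derive the two Duhamel estimates from it by duality/$TT^*$ and interpolation, following the classical Fourier-analytic approach.

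\textbf{Step 1: the homogeneous estimate.} The symbol of $U_j(t)$ is $e^{-t(i\xi)^{2j+1}}=e^{-i(-1)^j t\xi^{2j+1}}$, of modulus one, so $U_j(t)$ is unitary on $L^2_x$ and
\[
\pa_x^j U_j(t)\f(x) = c\int_\R (i\xi)^j e^{ix\xi} e^{-i(-1)^j t\xi^{2j+1}}\wh{\f}(\xi)\,d\xi.
\]
Fixing $x$ and regarding this as a function of $t$, I would apply the change of variables $\ta=(-1)^j\xi^{2j+1}$, a bijection of $\R$ since $2j+1$ is odd, with $|d\ta|=(2j+1)|\xi|^{2j}\,|d\xi|$, which turns $e^{-i(-1)^j t\xi^{2j+1}}=e^{-it\ta}$ into a pure Fourier kernel in $t$. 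Since $|e^{ix\xi}|=1$, Plancherel in the $t\leftrightarrow\ta$ variables yields an $x$-independent value:
\[
\norm{\pa_x^j U_j(t)\f(x)}_{L^2_t}^2 = c'\int_\R \left|\frac{(i\xi)^j\wh{\f}(\xi)}{(2j+1)|\xi|^{2j}}\right|^2 (2j+1)|\xi|^{2j}\,d\xi = \frac{c'}{2j+1}\norm{\f}_{L^2_x}^2.
\]
The decisive cancellation is that the exponent $j$ of $\pa_x$ produces exactly the weight $|\xi|^{2j}$ that matches the Jacobian, leaving $\int|\wh{\f}|^2\,d\xi$; taking the supremum over $x$ gives the first estimate (in fact with equality up to a constant, which is its sharpness).

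\textbf{Step 2: endpoints of the Duhamel estimates.} For each of the last two estimates I would prove the endpoints $\s=0$ and $\s=j$ and then interpolate; the finite-time factor arises because the Duhamel integration is over the forcing interval $s\in[0,T]$. Writing $A\f=\pa_x^j U_j(\cdot)\f$, Step 1 gives $A:L^2_x\to L^{\I}_x L^2_t$ boundedly, so its adjoint $A^*g=(-1)^j\pa_x^j\int_\R U_j(-s)g(s)\,ds$ maps $L^1_x L^2_t\to L^2_x$, and hence $AA^*g=(-1)^j\pa_x^{2j}\int_\R U_j(t-s)g(s)\,ds$ maps $L^1_x L^2_t\to L^{\I}_x L^2_t$; this is precisely the $\s=j$ endpoint of the third estimate. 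The $\s=0$ endpoint of the third estimate follows from Minkowski's integral inequality in $s$, Step 1 applied to each $U_j(t-s)f(s)$, and Cauchy--Schwarz over $s\in[0,T]$, which produces the factor $T^{1/2}$. For the second estimate, the $\s=0$ endpoint comes from Minkowski and unitarity, giving $\norm{\int_0^T U_j(t-s)f(s)\,ds}_{L^2_x}\le T^{1/2}\norm{f}_{L^2_x L^2_t}$ uniformly in $t$; the $\s=j$ endpoint is obtained by pairing with $\psi\in L^2_x$, transposing $\pa_x^j U_j(t-s)$ onto $\psi$, bounding $\sup_x\norm{\pa_x^j U_j(\cdot)\psi}_{L^2_t}\le C\norm{\psi}_{L^2_x}$ via Step 1, and closing with an $L^1_x$--$L^2_t$ Hölder estimate against $f$.

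\textbf{Step 3: interpolation.} With the endpoints established, I would fill the range $\s\in\{0,\dots,j\}$ by applying Stein's interpolation theorem to the analytic family $T_z f=|\pa_x|^{jz}\int U_j(t-s)f(s)\,ds$: the line $\Re z=0$ carries the $\s=0$ bound (using $L^2$-boundedness of the imaginary powers $|\pa_x|^{iy}$, $y\in\R$) and $\Re z=1$ the $\s=j$ bound. At $\te=\s/j$ this gives $\tfrac1p=\tfrac{1-\te}{2}+\te=\tfrac{j+\s}{2j}$ and $\alpha=\tfrac{1-\te}{2}=\tfrac{j-\s}{2j}$, exactly matching the stated exponents $p=2j/(j+\s)$ and $\alpha=(j-\s)/2j$.

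\textbf{Main obstacle.} The delicate point is Step 1: the estimate is sharp precisely because the single substitution $\ta=(-1)^j\xi^{2j+1}$ forces the gain of \emph{exactly} $j$ derivatives, and any other power destroys the $x$-uniform cancellation of $|\xi|^{2j}$. The secondary care is in Step 3, where interpolating simultaneously in the differentiation order and in the mixed-norm integrability requires the analytic-family framework rather than plain Riesz--Thorin, together with uniform control of the imaginary-power multipliers on the endpoint spaces.
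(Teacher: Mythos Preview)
The paper does not give a proof of this lemma: it is quoted from \cite{KPV2,KPV4} and used as a black box, so there is no ``paper's own proof'' to compare against. Your outline is precisely the standard argument from those references---Plancherel in $t$ after the substitution $\tau=(-1)^j\xi^{2j+1}$ for the sharp homogeneous smoothing, duality/$TT^*$ for the $\sigma=j$ endpoints of the inhomogeneous bounds, Minkowski plus Cauchy--Schwarz on $[0,T]$ for the $\sigma=0$ endpoints, and Stein analytic interpolation to recover the stated exponents $p=2j/(j+\sigma)$, $\alpha=(j-\sigma)/2j$---and the sketch is correct.
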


The next one is an interpolation inequality used throughout this paper. 
\begin{lemma}[\cite{LMP}] \label{lem:0}
Let $\mu>0$ and $r\in\Z^+ $. Denote $\theta \in[0,1]$ with $(1-\theta)r\in\Z^+$. Then it holds that  
\begin{align*}
	\Lebn{\Jbr{x}^{\theta \mu} \pa_x^{(1-\theta)r} f}{2} \le{}& C\Lebn{\Jbr{x}^{\mu}  f}{2}^{\theta} \Lebn{\pa_x^{r} f}{2}^{1-\theta} +L.O.T., 
\end{align*}
where the lower order terms $\,L.O.T.$ are bounded by
\begin{align*}
\sum_{\substack{0 \le \beta \le 1, \\ (1-\beta)(r-1)\in\Z^+}} \Lebn{\Jbr{x}^{\beta (\mu-1)} \pa_x^{(1- \b)(r-1)} f}{2} . 
\end{align*}
\end{lemma}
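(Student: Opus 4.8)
The plan is to read this as a weighted Gagliardo--Nirenberg inequality and to prove it by establishing discrete log-convexity of the quantity $N(a,k):=\Lebn{\Jbr{x}^{a}\pa_x^{k}f}{2}$ along the segment joining the two endpoints. Writing $k=(1-\theta)r\in\{0,1,\dots,r\}$ and $\theta=(r-k)/r$, the asserted estimate reads $N(a_k,k)\le C\,N(\mu,0)^{\theta}N(0,r)^{1-\theta}+L.O.T.$ with $a_k=\mu(r-k)/r$, and the two endpoints $k=0$, $k=r$ are exact identities, so only the interior indices $1\le k\le r-1$ carry content. By density it suffices to argue for Schwartz $f$, so that the boundary terms in the integrations by parts below vanish and the general case follows by approximation.

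First I would derive a three-term recursion relating each interior point to its two neighbours on the segment. Starting from $N(a_i,i)^2=\int\Jbr{x}^{2a_i}\pa_x^{i}f\,\overline{\pa_x^{i}f}\,dx$ and integrating by parts once, the derivative falls either on $\pa_x^{i}f$ (producing $\pa_x^{i+1}f$) or on the weight (producing the commutator factor $2a_i\,x\Jbr{x}^{2a_i-2}$). Splitting $\Jbr{x}^{2a_i}=\Jbr{x}^{a_i+\mu/r}\Jbr{x}^{a_i-\mu/r}$ and applying Cauchy--Schwarz to the main term gives exactly $N(a_{i-1},i-1)\,N(a_{i+1},i+1)$, since $a_i\pm\mu/r=a_{i\mp1}$. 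For the commutator term I would bound $|x|\Jbr{x}^{2a_i-2}\le\Jbr{x}^{2a_i-1}$, split the weight as $\Jbr{x}^{a_i}\Jbr{x}^{a_i-1}$, apply Cauchy--Schwarz, and then Young's inequality to absorb the resulting $\tfrac12 N(a_i,i)^2$ into the left-hand side. This yields
\begin{align*}
N(a_i,i)^2\le 2\,N(a_{i-1},i-1)\,N(a_{i+1},i+1)+4a_i^2\,N(a_i-1,\,i-1)^2,\qquad 1\le i\le r-1.
\end{align*}

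The remaining two steps are the bookkeeping of the commutator terms and the convexity iteration. For the first, observe that the commutator term $N(a_i-1,i-1)$ has derivative order $i-1\le r-2$ and weight exponent $a_i-1=\mu(r-i)/r-1$, and a direct computation shows $a_i-1\le(\mu-1)(r-i)/(r-1)$ for every $\mu>0$ and $1\le i\le r-1$; since $\Jbr{x}\ge1$, this dominates each commutator by the reduced-line term $\Lebn{\Jbr{x}^{\beta(\mu-1)}\pa_x^{(1-\beta)(r-1)}f}{2}$ with $\beta=(r-i)/(r-1)$, which is precisely one of the summands in the claimed bound for $L.O.T.$ For the second, I would pass to $b_i=\log N(a_i,i)$; discarding the lower-order terms momentarily, the recursion gives the discrete inequality $b_{i+1}-2b_i+b_{i-1}\ge-\log(2C)$, and the discrete maximum principle (comparing $b_i$ to its linear interpolant plus the parabola $\tfrac{\log(2C)}{2}i(r-i)$, both pinned at $i=0,r$) yields $b_i\le(1-i/r)b_0+(i/r)b_r+C'$ with $C'$ depending only on the fixed integer $r$, i.e.\ the desired product bound at each interior index. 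The genuine lower-order contributions are then reinstated as an additive perturbation carried through the same comparison, producing the sum over $\beta$ in the statement.

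The main obstacle I anticipate is the weight bookkeeping rather than any single hard estimate: one must verify that every commutator generated along the chain lies below the reduced interpolation line from $(\mu-1,0)$ to $(0,r-1)$ in the weight/derivative plane, and the elementary inequality $a_i-1\le(\mu-1)(r-i)/(r-1)$ is exactly what makes this true and explains why the stated $L.O.T.$ carries the exponent pattern $\beta(\mu-1)$ at derivative order $(1-\beta)(r-1)$. Care is also needed to keep the intermediate exponents $a_i\pm\mu/r$ inside $[0,\mu]$, which holds precisely for the interior indices, and to permit negative weight exponents when $\mu<1$, which is harmless since $\Jbr{x}\ge1$ makes such weights bounded.
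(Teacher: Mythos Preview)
Your approach is correct and genuinely different from the paper's. Both arguments establish discrete log-convexity of the sequence $A_l=\Lebn{\Jbr{x}^{(1-l/r)\mu}\pa_x^{l}f}{2}$, but by different recursions. The paper pairs $A_0$ with $A_{2l}$ (or $A_{2l-r}$ with $A_r$) and integrates by parts several times to obtain a \emph{doubling} relation $A_l^2\le A_0A_{2l}+L.O.T.$ (respectively $A_l^2\le A_{2l-r}A_r+L.O.T.$), splitting into three subranges of $l$; the final chain of substitutions is left implicit. You instead perform a single integration by parts to get the uniform \emph{nearest-neighbour} relation $N_i^2\le 2N_{i-1}N_{i+1}+CE_i^2$ and then invoke a discrete maximum principle. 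Your scheme is cleaner in that the recursion has one form for all interior $i$ and each step produces exactly one commutator, whose placement on the reduced $(\mu-1,r-1)$ line you verify correctly via the elementary inequality $a_i-1\le(\mu-1)(r-i)/(r-1)$. The paper's scheme ties each $A_l$ more directly to an endpoint at the cost of tracking a family of commutators per step and distinguishing three cases.

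One point deserves a sentence more than you give it. The discrete maximum principle applies to a purely multiplicative recursion $b_{i+1}-2b_i+b_{i-1}\ge -c$, but your actual inequality is $N_i^2\le 2N_{i-1}N_{i+1}+CE^2$ with an \emph{additive} error, and taking logarithms does not commute with that addition. Your phrase ``reinstated as an additive perturbation carried through the same comparison'' hides the mechanism. The standard fix is to set $\ti N_i=N_i+KE$ with $K=K(r)$ large and check directly (using $N_i\le\sqrt{2N_{i-1}N_{i+1}}+\sqrt{C}E\le(N_{i-1}+N_{i+1})/\sqrt{2}+\sqrt{C}E$) that $\ti N_i^2\le C'\ti N_{i-1}\ti N_{i+1}$; then the maximum-principle step applies to $\ti N_i$, and $(N_0+KE)^{1-\theta}(N_r+KE)^{\theta}\le C_r\bigl(N_0^{1-\theta}N_r^{\theta}+E\bigr)$ by splitting on whether each endpoint dominates $E$. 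Since $r$ is a fixed integer this is routine, but it is worth saying explicitly; the paper is equally terse at the corresponding ``collecting these'' step.
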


\begin{proof}
We shall give a proof for self-containedness.
Let us only show the case $r$ is even ($r = 2N$), because the odd case is similar. 
We also only consider real-valued functions for simplicity.
Set
\begin{align*}
	A_l = \Lebn{\Jbr{x}^{\frac{r-l}{r}\mu }\pa_x^l f}{2}
\end{align*}
for any $0 \le l \le r$. As for $A_N$, combining $A_0$ with $A_{2N}$, it follows from the integration by parts that
\begin{align}
	\begin{aligned}
	&{}\int_{\R} \Jbr{x}^{\mu} f \pa_x^{2N} f\, dx \\
	\sim{}& \int_{\R} \Jbr{x}^{\mu} \(\pa_x^N f\)^2\, dx + \sum_{k=1}^{N} \int_{\R} \Jbr{x}^{\mu-k} \pa_x^{N-k} f \pa_x^{N} f\, dx.
	\end{aligned}
	\label{po:0}
\end{align}
Here we write $X \sim Y + Z$ to indicate $X = \pm Y + cZ$ for some constant $c$.
For the last term of R.H.S in the above, we estimate  
\begin{align*}
	&{}\int_{\R} \Jbr{x}^{\mu-k} \pa_x^{N-k} f \pa_x^{N} f\, dx \\
	\le{}& \Lebn{\Jbr{x}^{\(\mu-1\) \b_0}\pa_x^{(r-1)(1-\b_0)} f}{2} \Lebn{\Jbr{x}^{\(\mu-1\) \t_0}\pa_x^{(r-1)(1-\t_0)} f}{2}	,
\end{align*}
since it is possible to take $\b_0 = \b_0(k)$, $\t_0 \in [0,1]$ such that
\begin{align*}
	\begin{cases}
	\mu -k \le (\mu-1)(\b_0 + \t_0), \\ 
	N-k = (r-1)(1-\b_0),\; N = (r-1)(1-\t_0).
	\end{cases}
\end{align*}
Hence one sees from \eqref{po:0} and the H\"older inequality that
\begin{align}
	\begin{aligned}
	A_N^2 \le{}& A_0 A_r \\
	&{}+ C \sum_{k=1}^{N} \Lebn{\Jbr{x}^{\(\mu-1\) \b_0(k)}\pa_x^{(r-1)(1-\b_0(k))} f}{2} \\
		 &{} \quad \times \Lebn{\Jbr{x}^{\(\mu-1\) \t_0}\pa_x^{(r-1)(1-\t_0)} f}{2}.
	\end{aligned}
	\label{po:1}
\end{align}
Let us next estimate $A_{l}$ for all $1 \le l \le N-1$. Unifying $A_0$ and $A_{2l}$, by the integration by parts, we deduce that
\begin{align*}
	\int_{\R} \Jbr{x}^{\mu} f \Jbr{x}^{\frac{r-2l}{r} \mu} \pa_x^{2l} f\, dx \sim \int_{\R} \Jbr{x}^{\frac{2(r-l)}{r} \mu} ( \pa_x^l f )^2\, dx + \sum_{k=1}^{l} L_{l,k},
\end{align*}
where 
\[
	L_{l,k} = \int_{\R} \Jbr{x}^{\frac{2(r-l)}{r} \mu-k} \pa_x^{l-k} f \pa_x^{l} f\, dx.
\]
A use of integration by parts gives us
\begin{align*}
	L_{l,1} = \frac12 \int_{\R} \( \Jbr{x}^{\frac{(r-l)}{r} \mu-1} \pa_x^{l-1} f \)^2\, dx \le C \Lebn{\Jbr{x}^{\frac{r-l}{r-1}(\mu-1)}\pa_x^{l-1} f}{2}.
\end{align*}
We further obtain 
\begin{align*}
	L_{l,k} \le \Lebn{\Jbr{x}^{\(\mu-1\) \b_1}\pa_x^{(r-1)(1-\b_1)} f}{2} \Lebn{\Jbr{x}^{\(\mu-1\) \t_1}\pa_x^{(r-1)(1-\t_1)} f}{2},
\end{align*}
for any $k \ge 2$, since there exist $\b_1 = \b_1(k)$, $\t_1 \in [0,1]$ such that
\begin{align*}
	\begin{cases}
	\frac{2(r-l)}{r} \mu - k \le (\mu-1)(\b_1 + \t_1), \\ 
	l-k = (r-1)(1-\b_1),\; l = (r-1)(1-\t_1).
	\end{cases}
\end{align*}
This implies
\begin{align}
	\begin{aligned}
	A_l^2 \le{}& A_0 A_{2l} \\
	&{}+ C \sum_{k=1}^{l} \Lebn{\Jbr{x}^{\(\mu-1\) \b_1(k)}\pa_x^{(r-1)(1-\b_1(k))} f}{2} \\
		&{} \quad \times \Lebn{\Jbr{x}^{\(\mu-1\) \t_1}\pa_x^{(r-1)(1-\t_1)} f}{2}
	\end{aligned}
	\label{po:2}
\end{align}
for any $0 \le l \le N-1$.
Finally we shall consider $A_{l}$ for all $N+1 \le l \le r-1$. Combining $A_{2l-r}$ with $A_r$, using the integration by parts, we reach to
\begin{align*}
	\int_{\R} \Jbr{x}^{\frac{2(r-l)}{r} \mu} \pa_x^{2l-r} f \pa_x^r f\, dx \sim \int_{\R} \Jbr{x}^{\frac{2(r-l)}{r} \mu} ( \pa_x^l f )^2\, dx + \sum_{k=1}^{r-l} L_{l,k}.
\end{align*}
Just arguing as in \eqref{po:2}, one has
\begin{align}
	\begin{aligned}
	A_l^2 \le{}& A_{2l-r} A_{r} \\
	&{}+ C\sum_{k=1}^{r-l} \Lebn{\Jbr{x}^{\(\mu-1\) \b_1(k)}\pa_x^{(r-1)(1-\b_1(k))} f}{2} \\
	&{} \quad \times \Lebn{\Jbr{x}^{\(\mu-1\) \t_1}\pa_x^{(r-1)(1-\t_1)} f}{2}
	\end{aligned}
	\label{po:3}
\end{align}
for all $N+1 \le l \le r-1$. Collecting \eqref{po:1}, \eqref{po:2} and \eqref{po:3}, we have the desired estimate.
\end{proof}

We finish this section by stating properties of  $U_{j}(t)$. 
Combining the fact
\begin{align}
	U_{j}(-t) x U_{j}(t) = x +(2j+1) t \pa_x^{2j} \label{lin:1}
\end{align}
with Lemma \ref{lem:0}, the following is valid:
\begin{lemma} \label{lem:3}
Let $j \in \Z^+$. It holds that
\begin{align*}
	{}\Lebn{\Jbr{x}^{\b} U_j(t)f}{2} 
	\le{}& C \Jbr{t}^{\b} \( \Lebn{\Jbr{x}^{\b} f}{2} + \Lebn{\pa_x^{2j\b} f}{2}\)
\end{align*}
for any $\b \in \Z^{+}$. 
\end{lemma}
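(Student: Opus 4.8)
The plan is to exploit the conjugation identity \eqref{lin:1}, which turns the weight $\Jbr{x}^\b$ under the flow $U_j(t)$ into a polynomial in $x$ and $t\pa_x^{2j}$, and then interpolate the resulting mixed norms via Lemma \ref{lem:0}. First I would reduce to integer powers: since $\b\in\Z^+$, write $\Jbr{x}^\b = (1+|x|^2)^{\b/2}$ and observe that $\Jbr{x}^\b \lesssim 1 + |x|^\b$, so it suffices to bound $\Lebn{|x|^\b U_j(t) f}{2}$, or equivalently (after a further elementary reduction, splitting $|x|^\b$ against $x^\b$ up to sign on each half-line, or just keeping $\Jbr{x}^\b$ throughout since it is smooth) to bound $\Lebn{\Jbr{x}^\b U_j(t)f}{2}$ by controlling $\Lebn{x^\b U_j(t) f}{2}$-type quantities. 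The point is that $\Lebn{\Jbr{x}^\b U_j(t) f}{2} = \Lebn{\Jbr{x}^\b U_j(t) f}{2}$ is, by unitarity of $U_j$ on $L^2$ together with \eqref{lin:1}, comparable to $\Lebn{U_j(-t)\Jbr{x}^\b U_j(t) f}{2}$, and the operator $U_j(-t)\Jbr{x}^\b U_j(t)$ is (up to lower-order smooth-in-$x$ multipliers coming from differentiating $\Jbr{x}^\b$) a polynomial of degree $\b$ in the commuting-up-to-lower-order pair $x$ and $(2j+1)t\pa_x^{2j}$.

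Concretely, I would expand
\[
	U_j(-t)\Jbr{x}^\b U_j(t) f
\]
and, since each application of the identity $U_j(-t) x\, U_j(t) = x + (2j+1)t\pa_x^{2j}$ introduces a term that is either $x$ acting or $(2j+1)t\pa_x^{2j}$ acting, after $\b$ steps one obtains a sum of monomials of the form $c_{k}\, t^{k} x^{\b_1}\pa_x^{2jk_1}x^{\b_2}\pa_x^{2jk_2}\cdots$ with total $x$-degree $\le\b$, total $t$-degree $k\le\b$, and total derivative count $2jk$. Commuting all the $x$'s to the left past the derivatives produces only lower-order corrections (polynomials in $x$ of strictly smaller degree times fewer derivatives), so the leading terms are $t^k x^{\b-\ell}\pa_x^{2jk}$ with $0\le k\le\b$ and appropriate $\ell\ge 0$; in particular the two extreme cases $k=0$ (pure weight $x^\b$) and $\ell=0$ (pure derivative $t^\b\pa_x^{2j\b}$) appear. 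Taking $L^2$ norms and using $\Jbr{t}^k\le\Jbr{t}^\b$ for $0\le k\le\b$, one is left to bound a finite sum of terms $\Jbr{t}^\b\Lebn{\Jbr{x}^{\b-\ell}\pa_x^{2jk} f}{2}$ together with their lower-order cousins.

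The final step is to absorb every intermediate mixed norm $\Lebn{\Jbr{x}^{\nu}\pa_x^{\rho} f}{2}$ (with $\nu<\b$, $\rho<2j\b$ arising in the expansion) into the two endpoint quantities $\Lebn{\Jbr{x}^\b f}{2}$ and $\Lebn{\pa_x^{2j\b} f}{2}$ using Lemma \ref{lem:0} with $\mu=\b$ and $r=2j\b$: choosing $\theta$ so that $\theta\mu=\nu$ and $(1-\theta)r=\rho$ (these are compatible because the $x$-degree and derivative count in each monomial scale consistently, the weight being traded for $2j$ derivatives at each step), Lemma \ref{lem:0} gives $\Lebn{\Jbr{x}^{\nu}\pa_x^{\rho} f}{2}\lesssim \Lebn{\Jbr{x}^\b f}{2}^{\theta}\Lebn{\pa_x^{2j\b} f}{2}^{1-\theta} + L.O.T.$, and Young's inequality turns the product into a sum; the lower-order terms from Lemma \ref{lem:0} are then handled by an induction on $\b$ (they have the same structure with $\b-1$ in place of $\b$), which closes the estimate. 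I expect the main obstacle to be purely bookkeeping: verifying that \emph{every} monomial produced by the $\b$-fold expansion of $U_j(-t)\Jbr{x}^\b U_j(t)$, after commutator reductions, has $(x\text{-weight}, \text{derivative order})$ lying on the interpolation segment between $(\b,0)$ and $(0,2j\b)$ — i.e. that the weight-to-derivative exchange rate is exactly $2j$ at every stage — so that Lemma \ref{lem:0} is applicable to each term; the powers of $\Jbr{t}$ and the lower-order induction are then routine.
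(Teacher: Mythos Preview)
Your proposal is correct and follows essentially the same route as the paper: reduce $\Lebn{\Jbr{x}^\b U_j(t)f}{2}$ to $\Lebn{x^\b U_j(t)f}{2}$, use unitarity and the identity \eqref{lin:1} to rewrite this as $\Lebn{(x+(2j+1)t\pa_x^{2j})^\b f}{2}$, expand the $\b$-fold product into monomials $t^k x^{\b-k-\ell}\pa_x^{2jk-\ell}$ (after commuting $x$ to the left), and then absorb each mixed term via Lemma \ref{lem:0} applied repeatedly. The paper is terser about the bookkeeping you flag at the end (it simply writes out the expansion and says ``applying Lemma \ref{lem:0} \dots\ repeatedly''), but the mechanism --- including the iterative handling of the lower-order terms that Lemma \ref{lem:0} produces --- is the same.
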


\begin{proof}
By using \eqref{lin:1}, we have
\[
U_j(-t) x^{\b} U_{j}(t)f = \( x + (2j+1) t \pa_x^{2j} \)^{\b} f
\]
for all $\b \in \Z^+$. This implies that
\begin{align*}
	{}\Lebn{\Jbr{x}^{\b} U_j(t)f}{2} 
	\le{}& C\( \Lebn{U_j(t)f}{2} + \Lebn{|x|^\b U_j(t)f}{2} \) \\
	\le{}& C\Lebn{f}{2} + C\Lebn{\( x + (2j+1) t \pa_x^{2j} \)^{\b} f}{2}.
\end{align*}
Set $\ga = 2j$.
A direct calculation gives us 
\begin{align*}
	&{}\( x + (2j+1) t \pa_x^{\ga} \)^{\b}f \\
	={}& x^\b f + t \sum_{k=0}^{\min(\ga, \b-1)} c_{1,k} x^{\b-1-k} \pa_x^{\ga-k} f  + t^2 \sum_{k=0}^{\min(2\ga, \b-2)} c_{2,k} x^{\b-2-k} \pa_x^{2\ga-k} f \\
	&{}\cdots  + t^{\b-1} \( c_{\b-1,0} x \pa_x^{(\b-1)\ga} f + c_{\b-1,1} \pa_x^{(\b-1)\ga-1} f \) + t^{\b} \pa_x^{\b \ga} f,
\end{align*}
where $c_{1, k}, \cdots, c_{\b-1, k} \in \Z^+$ depending on $j$. Applying Lemma \ref{lem:0} to $L^2$ norm of each terms repeatedly, we obtain the desired estimate.
\end{proof}

\section{Nonlinear estimates} \label{sec:3}

In this section, we collect nonlinear estimates. To this end, we introduce the key norm corresponding to the solution space in Theorem \ref{thm:1}:
\begin{align*}
	\norm{u}_{X_{T}} :={}& \norm{u}_{L^{\I}_T H^{s}} + \norm{\Jbr{x}^{m} u}_{L^{\I}_T L^\I_x} \\
	&{}+ \sum_{\ga =1}^{2j+2} \norm{\Jbr{x}^{m} \pa^{\ga}_{x} u}_{L^{\I}_T L^2_x} + \sum_{l=0}^{j-1} \xTLebn{\pa_x^{s+j-l} u}{\I}{2}
\end{align*} 
for all $s$, $j$, $m \in \Z^+$ and any $T >0$.

\begin{lemma} \label{lem:4}
Let $m= \left[ \frac{1}{\a} \right]+1$. 
Fix $s$, $j  \in \Z^+$ with $s-j+1 \in \Z^+$. Put $q \in (1, 2]$ and $\ga \in \Z^+$ with $\ga \le 2j+2$. Then it holds that
\begin{align}
&{}	\TxLebn{|u|^{\a} \pa_x^{2j-1} u}{1}{2} \le CT \norm{u}_{X_T}^{\a+1},
\label{lem:40} \\
&{}	\begin{aligned}
	&{}\xTLebn{\partial_x^{s-j+1} (|u|^{\a} \pa_x^{2j-1} u)}{q}{2} \\
	\le{}& C\norm{u}_{X_T}^{\a+1} + CT^{1/2} \la^{\a-1} \norm{u}_{X_T}^2 
	+ CT^{1/2} \la^{\a-(s-j+1)} \norm{u}_{X_T}^{s-j+2}
	\end{aligned}
\label{lem:42} \\
	&{}\TxLebn{\Jbr{x}^m \pa_x^{\ga} (|u|^{\a}\pa_x^{2j-1} u)}{1}{2} \le CT\la^{\a-1} \norm{u}_{X_T}^2 + CT \la^{\a-\ga} \norm{u}_{X_T}^{\ga+1}
\label{lem:43}
\end{align}
for any $T>0$ as long as $\Jbr{x}^m |u(t, x)| \ge \frac{\l}{2}$ for any $(t,x) \in [0,T] \times \R$.
\end{lemma}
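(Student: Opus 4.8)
The plan is to estimate each of the three nonlinear quantities by distributing derivatives via the Leibniz rule and controlling the resulting terms by the $X_T$-norm, using the non-degeneracy hypothesis $\Jbr{x}^m|u| \ge \la/2$ to absorb the negative powers of $|u|$ that arise when derivatives fall on the rough factor $|u|^\a$. The basic mechanism is that $\pa_x(|u|^\a)$ is morally $|u|^{\a-1}\pa_x u$ (schematically, treating $|u|^\a = (u\bar u)^{\a/2}$ and differentiating), and more generally $\pa_x^k(|u|^\a)$ is a sum of terms of the form $|u|^{\a-p}\prod \pa_x^{\b_i}u$ with $p \le k$ and $\sum \b_i = k$; each factor $|u|^{-1}$ is then bounded pointwise by $\frac{2}{\la}\Jbr{x}^m$, which is precisely why the weight $\Jbr{x}^m$ with $m = [1/\a]+1$ (so that $m\a > 1$, enough to beat the $\a$ lost in each differentiation) is the natural one. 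I would set up a preliminary sublemma recording this pointwise bound on $\pa_x^k(|u|^\a)$ in terms of $\la^{-p}$ times products of derivatives of $u$ with appropriate weights, then feed it into Hölder.

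For \eqref{lem:40}, no derivative hits $|u|^\a$, so one simply writes $\||u|^\a \pa_x^{2j-1}u\|_{L^2_x} \le \||u|^\a\|_{L^\infty_x}\|\pa_x^{2j-1}u\|_{L^2_x} \le \|u\|_{L^\infty_x}^\a\|u\|_{H^s}$ (using $2j-1 \le s$, which follows from $s-j+1 \ge 2jm+2j+2$), then integrate in $t$ over $[0,T]$ to produce the factor $T$ and bound the result by $\|u\|_{X_T}^{\a+1}$. For \eqref{lem:42} one distributes $\pa_x^{s-j+1}$ by Leibniz: the term where all derivatives land on $\pa_x^{2j-1}u$ gives $|u|^\a \pa_x^{s+j}u$, estimated in $L^q_tL^2_x$ by putting $|u|^\a$ in $L^\infty$ and $\pa_x^{s+j}u$ in $L^\infty_xL^2_t$ using the Kato-smoothing term $\xTLebn{\pa_x^{s+j-l}u}{\infty}{2}$ at $l=0$ — this yields the clean $\|u\|_{X_T}^{\a+1}$ contribution with no time power; the remaining terms have at least one derivative on $|u|^\a$, hence a factor $\la^{\a-p}$ with $1 \le p \le s-j+1$, and one places the worst derivative of $u$ in $L^\infty_xL^2_t$ and the rest in $L^\infty_x$ via Sobolev embedding, using a spare $L^q_t$ integration on $[0,T]$ to gain $T^{1/2}$ (taking $q$ near $2$) — the $p=1$ term gives the $T^{1/2}\la^{\a-1}\|u\|_{X_T}^2$ piece and the $p=s-j+1$ term the $T^{1/2}\la^{\a-(s-j+1)}\|u\|_{X_T}^{s-j+2}$ piece, with intermediate $p$ interpolating between them. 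For \eqref{lem:43} one first commutes the weight $\Jbr{x}^m$ inside, applies Leibniz to $\pa_x^\ga$, bounds $\Jbr{x}^m$ against the weighted norms $\norm{\Jbr{x}^m\pa_x^\b u}_{L^\infty_TL^2_x}$ and $\norm{\Jbr{x}^m u}_{L^\infty_TL^\infty_x}$ in $X_T$ (for $\ga \le 2j+2$ every derivative order appearing is covered), again peeling off $\la^{\a-p}$ for each derivative on $|u|^\a$, and integrates in $t$ to get the factor $T$; the term with no derivative on $|u|^\a$ still needs a negative power because $\ga \ge 1$ means at least $\pa_x^\ga u$ must be weighted, but one extra factor $\Jbr{x}^m |u|^{-1}\cdot|u| $ bookkeeping shows the $\la^{\a-1}$ and $\la^{\a-\ga}$ powers are the extremes, matching the claim. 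Wherever a low-regularity weighted norm like $\Lebn{\Jbr{x}^{\te m}\pa_x^{(1-\te)r}u}{2}$ appears, I would invoke the interpolation inequality Lemma \ref{lem:0} to trade it for the endpoint norms $\Lebn{\Jbr{x}^m\pa_x^\cdot u}{2}$ and $\Lebn{\pa_x^r u}{2}$ already in $X_T$, absorbing the lower-order terms recursively.

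The main obstacle will be the bookkeeping in \eqref{lem:42} and \eqref{lem:43}: tracking, for each Leibniz term, exactly which power $\la^{\a-p}$ appears, verifying that the total number and order of derivatives of $u$ never exceeds what $X_T$ controls (this is where the hypothesis $s-j+1 \ge 2jm+2j+2$ is genuinely used — the critical term is the weighted estimate of $\pa_x^{2j+2}(|u|^\a\pa_x^{2j-1}u)$ after conjugating the weight through $U_j(t)$, which by \eqref{lin:1} converts $\Jbr{x}^m$ into something of order $\pa_x^{2jm}$, so one needs $s$ large enough that $s-j+1$ still dominates $2j-1 + 2j+2 + 2jm$ after all losses), and checking that the Hölder exponents close, in particular that exactly one factor can always be routed into the Kato-smoothing norm $L^\infty_xL^2_t$ while the rest sit in $L^\infty_x$ with room to spare for a $t$-integration gaining the stated power of $T$. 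None of the individual estimates is deep, but the combinatorics of the derivative distribution is where an error would hide, so I would organize it by introducing the schematic notation $|u|^\a = \bigl(|u|^{\a-p}\bigr)\cdot(\text{product of }\pa_x^{\b_i}u)$ once and for all and then arguing uniformly.
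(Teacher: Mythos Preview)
Your outline matches the paper's approach --- Leibniz expansion, non-degeneracy to convert $|u|^{\a-n}$ into $\la^{\a-n}\Jbr{x}^{m(n-\a)}$, then H\"older and interpolation via Lemma~\ref{lem:0} --- but it glosses over the one genuinely delicate step, and is slightly confused about where the $T^{1/2}$ comes from. In \eqref{lem:42} the Leibniz terms split into two regimes. For $1\le k\le j-1$ the top factor $\pa_x^{\b_0}u$ has $\b_0=s+j-k\ge s+1$, so it lands in the Kato-smoothing norm $L^\infty_xL^2_T$, and the remaining $\b_i\le k\le j-1$ are small enough that each can carry a full weight $\Jbr{x}^m$; these terms produce \emph{no} $T^{1/2}$. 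For $j\le k\le s-j+1$, however, $\b_0\le s$ and the Kato-smoothing slot is unavailable; the paper instead places every factor in $L^\infty_T$ (this is where the $T^{1/2}$ actually arises, via $\|\cdot\|_{L^2_T}\le T^{1/2}\|\cdot\|_{L^\infty_T}$) and distributes the weight $\Jbr{x}^{m(n-\a)}$ across all $n+1$ factors as $\Jbr{x}^{m\t_i}\pa_x^{\b_i}u$, each handled through Lemma~\ref{lem:0} by writing $\b_i=(1-\t_i)s+\t_i r_i$ with $r_i\in\{1,\dots,2j+2\}$. Your proposal to route the worst derivative into $L^\infty_xL^2_t$ and ``the rest in $L^\infty_x$ via Sobolev embedding'' fails here: Sobolev embedding strips the weights, leaving $\Jbr{x}^{m(n-\a)}$ with nowhere to go.

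The step your plan does not name is the verification that such $\t_i$ can be chosen with $\t_0+\cdots+\t_n\ge n-\a$, i.e.\ that the total interpolation weight suffices to absorb $\Jbr{x}^{m(n-\a)}$. This is not automatic: summing the constraints $\b_i=(1-\t_i)s+\t_i r_i$ with $\sum_{i\ge1}\b_i=k$ and $\b_0=s+j-k$ gives $\sum\t_i\ge\frac{ns-j}{s-1}$, and one then needs $\frac{ns-j}{s-1}>n-\a$, which the paper checks using $s-1\ge\frac{2j}{\a}+3j$. Thus the regularity hypothesis $s-j+1\ge 2jm+2j+2$ is already used \emph{inside} the proof of \eqref{lem:42}, not only in the theorem via the weight--propagator commutation \eqref{lin:1} as you suggest. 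This weight-balance bookkeeping is the heart of the argument, not an afterthought to be absorbed recursively.
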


\begin{proof}
To simplify the exposition, we shall consider real-valued functions. 
\eqref{lem:40} is immediate from Sobolev embedding. Let us prove \eqref{lem:42}. A use of the Leibniz rule gives us
\begin{align*}
	\xTLebn{\partial_x^{s-j+1} (|u|^{\a} \pa_x^{2j-1} u)}{q}{2} \le{}& 	\sum_{k=0}^{s-j+1} \xTLebn{\pa_x^k (|u|^{\a}) \pa_x^{s+j-k} u}{q}{2} \\
	=:{}& A_0 + \sum_{k=1}^{j-1} A_k + \sum_{k=j}^{s-j+1} A_k.
\end{align*}
By the H\"older inequality, we see that
\begin{align*}
	A_0 
	\le {}& C\xTLebn{\pa_x^{s+j}u}{\I}{2} \TxLebn{\Jbr{x} |u|^{\a}}{\I}{\I} \Lebn{\Jbr{x}^{-1}}{q} \\
	\le {}& C\xTLebn{\pa_x^{s+j}u}{\I}{2} \TxLebn{\Jbr{x}^{m} u}{\I}{\I}^{\a}.
\end{align*}
In terms of $A_k$ for $1 \le k \le s-j+1$, the elements can be written as
\begin{align*}
	\pa_x^k (|u|^{\a}) \pa_x^{s+j-k} u ={}& \sum_{n=1}^{k} \sum_{\substack{ \b_1 + \cdots + \b_n = k \\ 1 \le \beta_1, \cdots, \beta_n \le k}} C_{\vec{\beta}} |u|^{\alpha-2n}u^n \, \pa_x^{\beta_1}u \cdots \partial_x^{\beta_n}u\, \pa_x^{\b_0} u \\
	=:{}& \sum_{n=1}^{k} \sum_{\substack{ \b_1 + \cdots + \b_n = k \\ 1 \le \beta_1, \cdots, \beta_n \le k}} F_{\b_1,\ldots,\b_n},
\end{align*}
where $\b_0 = s+j-k$.
When $1 \le k \le j-1$, in light of $\b_0 \ge s+1$, using $\Jbr{x}^m |u(t, x)| \ge \frac{\l}{2}$,
we deduce from the H\"older and the Gagliardo-Nirenberg inequality that
\begin{align*}
&{}\xTLebn{F_{\b_1,\ldots,\b_n}}{q}{2} \\
	\le{}& C\l^{\a-n} \xTLebn{\Jbr{x}^{m(n-\a)} \pa_x^{\beta_1}u \cdots \partial_x^{\beta_n}u\, \pa_x^{\b_0} u}{q}{2} \\
	\le{}& C \l^{\a-n} \xLebn{\Jbr{x}^{-1}}{2} \prod_{i=1}^{n-1} \TxLebn{\Jbr{x}^{m}\, \pa_x^{\b_i}u}{\I}{\I} \\
	&{} \quad \times \TxLebn{\Jbr{x}^{m}\, \pa_x^{\b_n}u}{\I}{\frac{2q}{2-q}} \xTLebn{\pa_x^{\b_0}u}{\I}{2} \\
		\le{}& C \l^{\a-n} \prod_{i=1}^{n} \( \TxLebn{\Jbr{x}^{m}\, \pa_x^{\b_i}u}{\I}{2} + \TxLebn{\Jbr{x}^{m}\, \pa_x^{\b_i+1}u}{\I}{2} \) \\
	&{} \quad \times \xTLebn{\pa_x^{\b_0}u}{\I}{2} \\
	\le{}& C\l^{\a-n} \norm{u}_{X_T}^{n+1}.
\end{align*}
Note that $m\a \ge 1$.
In the case $j \le k \le s-j+1$, in view of $\b_i \le s$, the similar calculation shows
\begin{align*}
	\xTLebn{F_{\b_1,\ldots,\b_n}}{q}{2} \le{}& C\l^{\a-n} \xTLebn{\Jbr{x}^{m(n-\a)} \pa_x^{\beta_1}u \cdots \partial_x^{\beta_n}u\, \pa_x^{\b_0} u}{q}{2} \\
	\le{}& C T^{\frac12} \l^{\a-n}  \prod_{i=1}^{n-1} \TxLebn{\Jbr{x}^{m\t_i}\, \pa_x^{\b_i}u}{\I}{\I} \\
	&{} \times \TxLebn{\Jbr{x}^{m\t_n}\, \pa_x^{\b_n}u}{\I}{\frac{2q}{2-q}} \TxLebn{\Jbr{x}^{m\t_0}\, \pa_x^{\b_0}u}{\I}{2} \\
	\le{}& C T^{\frac12} \l^{\a-n} \prod_{i=1}^{n} \( \TxLebn{\Jbr{x}^{m\t_i}\, \pa_x^{\b_i}u}{\I}{2} + \TxLebn{\Jbr{x}^{m\t_i}\, \pa_x^{\b_i+1}u}{\I}{2} \) \\
	&{} \times \TxLebn{\Jbr{x}^{m\t_0}\, \pa_x^{\b_0}u}{\I}{2}.
\end{align*}
We remark that it is possible to choose $\t_i \in [0,1]$ satisfying 
\begin{align}
\begin{aligned}
	&{} r_0 \in \{1, \cdots , 2j+2 \}, \quad r_i \in \{1, \cdots , 2j+1 \}, \quad i =1, 2, \cdots, n, \\
	&{} \b_i  = (1-\t_i)s+ \t_i r_i, \quad i=0, 1, \cdots, n.
\end{aligned}
\label{A20}
\end{align}
By using Lemma \ref{lem:0} iteratively, we deduce that for any $0 \le i \le n$ and $\s =0$, $1$,
\begin{align*}
	&{} \TxLebn{\Jbr{x}^{m\t_i}\, \pa_x^{(1-\t_i)s+ \t_i r_i + \s}u}{\I}{2} \\
	\le{}& C\TxLebn{\Jbr{x}^m \pa_x^{r_i + \s} u}{\I}{2}^{\t_i} \TxLebn{\pa_x^s u}{\I}{2}^{1-\t_i} \\
	&{} + C \hspace{-1cm} \sum_{\substack{0 \le \eta_i \le 1, \\ (1-\eta_i)(s-1)+\eta_i r_i + \s  \in\Z^+}} \hspace{-1cm} \TxLebn{\Jbr{x}^{(m-1)\eta_i}\, \pa_x^{(1-\eta_i)(s-1)+ \eta_i r_i + \s}u}{\I}{2} \\
	\le{}& C \norm{u}_{X_T}.
\end{align*}
Combining the above, \eqref{lem:42} is valid. To complete the proof, it remains to verify 
\begin{align*}
	\t_0+ \t_1 + \cdots +\t_n \ge n-\a. 
\end{align*}
Collecting \eqref{A20}, together with $\b_1 + \cdots + \b_n = k$ and $\b_0 = s+j-k$, we see
\begin{align*}
	j = \( n- (\t_0+\t_1+ \cdots + \t_n) \)s + \t_0 r_0+\t_1r_1+ \cdots + \t_n r_n. 
\end{align*}
By means of $r_0$, $r_i \in \{1,\cdots, 2j+1 \}$, we obtain
\begin{align*}
	\t_0 + \t_1+ \cdots +\t_n ={}& n + \frac{\t_0 r_0 + \t_1 r_1+ \cdots +\t_n r_n - j}{s}\\
	\ge{}& n+\frac{\t_0+\t_1+ \cdots +\t_n- j}{s},
\end{align*}
Thanks to $s -1 \ge \frac{2}{\a}j +3j$, it is concluded that
\begin{align*}
	\t_0+ \t_1+ \cdots +\t_n \ge \frac{ns-j}{s-1} > n - \frac{1}{\frac{2}{\a}+3} >  n - \a.
\end{align*}

Finally we shall show \eqref{lem:43}.
By the Leibniz rule, it holds that
\begin{align*}
	\TxLebn{\Jbr{x}^m \partial_x^{\ga} (|u|^{\a} \pa_x^{2j-1} u)}{1}{2} \le{}& T \sum_{k=0}^{\ga} \TxLebn{\Jbr{x}^m \pa_x^k (|u|^{\a}) \pa_x^{2j-1+\ga-k} u}{\I}{2} \\
	=:{}& T\( \ti{A}_0 + \sum_{k=1}^{\ga} \ti{A}_k \)
\end{align*}
for any $\ga \le 2j+2$. We then deduce from Lemma \ref{lem:0} and $\Jbr{x}^m |u(t, x)| \ge \frac{\l}{2}$ that
\begin{align*}
	\ti{A}_0 
	\le{}& C\la^{\a-1} \TxLebn{\Jbr{x}^m u}{\I}{\I} \TxLebn{\Jbr{x}^{m-1} \pa_x^{2j-1+\ga}u}{\I}{2} \\
	\le{}& C\la^{\a-1} \TxLebn{\Jbr{x}^m u}{\I}{\I} \\
	&{}\times \( \TxLebn{\Jbr{x}^{m} \pa_x^{\ga-1}u}{\I}{2}^{1-\frac{1}{m}}\TxLebn{\pa_x^{2mj+\ga-1}u}{\I}{2}^{\frac{1}{m}} + L.O.T. \) \\
	\le{}& C\la^{\a-1} \TxLebn{\Jbr{x}^m u}{\I}{\I} \\
	&{}\times \( \TxLebn{\Jbr{x}^{m} \pa_x^{\ga-1}u}{\I}{2} + \TxLebn{\pa_x^{2mj+\ga-1}u}{\I}{2} + L.O.T. \).
\end{align*}
Here, by means of a successive use of Lemma \ref{lem:0}, one has
\begin{align*}
	L.O.T. \le{}& C \hspace{-35pt} \sum_{\substack{0 \le \eta \le 1, \\ (1-\eta)(2mj -1)+ (\ga-1)  \in\Z^+}} \hspace{-22pt}\TxLebn{\Jbr{x}^{(m-1)\eta}\, \pa_x^{(1-\eta)(2mj -1)+(\ga -1) }u}{\I}{2} \\
	\le{}& C \( \TxLebn{\Jbr{x}^{m-1} \pa_x^{\ga-1}u}{\I}{2} + \norm{u}_{L^{\infty}_T \dot{H}^{\ga-1}}\right. \\
	&{}\qquad \left. + \norm{u}_{L^{\infty}_T \dot{H}^{2mj-1+\ga-1}} + \norm{u}_{L^{\infty}_T \dot{H}^{2mj-m+\ga-1}} \) \\
	\le{}& C \( \TxLebn{\Jbr{x}^{m-1} \pa_x^{\ga-1}u}{\I}{2} + \norm{u}_{L^{\infty}_T \dot{H}^{2mj-1+\ga-1}} \),
\end{align*}
which yields
\begin{align*}
	\ti{A}_0 \le C\la^{\a-1} \norm{u}_{X_T}^2.
\end{align*}
As for $1 \le k \le \ga$, the elements in $\ti{A}_k$ can be written as
\begin{align*}
	&{}\Jbr{x}^m \pa_x^k (|u|^{\a}) \pa_x^{2j-1+\ga-k} u \\
	={}& \sum_{n=1}^{k} \sum_{\substack{ \b_1 + \cdots + \b_n = k \\ 1 \le \beta_1, \cdots, \beta_n \le k}} C_{\vec{\beta}} \Jbr{x}^m |u|^{\alpha-2n}u^n \, \pa_x^{\beta_1}u \cdots \partial_x^{\beta_n}u\, \pa_x^{\b_0} u \\
	=:{}& \sum_{n=1}^{k} \sum_{\substack{ \b_1 + \cdots + \b_n = k \\ 1 \le \beta_1, \cdots, \beta_n \le k}} \ti{F}_{k},
\end{align*}
where $\b_0 = 2j-1+\ga-k$. In the case $k \le \ga -4$ and $\ga \ge 5$, noting that $\b_0 > 2j+2$, using $\Jbr{x}^m |u(t, x)| \ge \frac{\l}{2}$ and Lemma \ref{lem:0}, together with the H\"older and the Gagliardo-Nirenberg inequality, we are led to
\begin{align*}
\TxLebn{\ti{F}_k}{\I}{2} \le{}& C\l^{\a-n} \TxLebn{\Jbr{x}^{m(n+1-\a)} \pa_x^{\beta_1}u \cdots \partial_x^{\beta_n}u\, \pa_x^{\b_0} u}{\I}{2} \\
	\le{}& C \l^{\a-n} \prod_{i=1}^{n} \TxLebn{\Jbr{x}^{m}\, \pa_x^{\b_i}u}{\I}{\I} \TxLebn{\Jbr{x}^{m-1}\, \pa_x^{\b_0}u}{\I}{2}  \\
		\le{}& C \l^{\a-n} \prod_{i=1}^{n} \( \TxLebn{\Jbr{x}^{m}\, \pa_x^{\b_i}u}{\I}{2} + \TxLebn{\Jbr{x}^{m}\, \pa_x^{\b_i+1}u}{\I}{2} \) \\
	&{} \times \( \TxLebn{\Jbr{x}^{m} \pa_x^{\ga-1-k}u}{\I}{2} + \TxLebn{\pa_x^{2mj+\ga-1-k}u}{\I}{2} + L.O.T. \) \\
	\le{}& C\l^{\a-n} \norm{u}_{X_T}^{n+1}.
\end{align*}
One easily verifies the same estimate in the other cases $\ga -3 \le k$ or $\ga \le 4$, because of $\b_i \le 2j+2$ for any $0 \le i \le n$. Indeed, if $k=\ga$, we then calculate that
\begin{align*}
	&{}\TxLebn{\Jbr{x}^m \pa_x^\ga (|u|^{\a}) \pa_x^{2j-1} u}{\I}{2} \\
	\le{}& C \la^{\a-1} \TxLebn{\Jbr{x}^{m(2-\a)} |\pa_x^{\ga} u| \pa_x^{2j-1} u}{\I}{2} \\
	&{}+ C \la^{\a-n} \sum_{n=2}^{\ga} \sum_{\substack{ \b_1 + \cdots + \b_n = \ga \\ 1 \le \beta_1, \cdots, \beta_n \le \ga-1}} \TxLebn{\Jbr{x}^{m(n+1-\a)} \pa_x^{\beta_1}u \cdots \partial_x^{\beta_n}u\, \pa_x^{2j-1} u}{\I}{2}.
\end{align*}
For the first term, it follows from Sobolev embedding that 
\begin{align*}
	&{}\TxLebn{\Jbr{x}^{m(2-\a)} |\pa_x^{\ga} u| \pa_x^{2j-1} u}{\I}{2} \\
	\le{}& C \( \TxLebn{\Jbr{x}^{m} \pa_x^{2j-1} u }{\I}{2} + \TxLebn{\Jbr{x}^{m} \pa_x^{2j} u }{\I}{2} \) \TxLebn{\Jbr{x}^{m} \pa_x^{\ga} u}{\I}{2} \\
	\le{}& C \norm{u}_{X_T}^2.
\end{align*}
In the similar way, the other terms are estimated as follows:
\begin{align*}
	&{}\TxLebn{\Jbr{x}^{m(n+1-\a)} \pa_x^{\beta_1}u \cdots \partial_x^{\beta_n}u\, \pa_x^{2j-1} u}{\I}{2} \\
	\le{}& \prod_{i=1}^{n} \( \TxLebn{\Jbr{x}^{m} \pa_x^{\be_i} u }{\I}{2} + \TxLebn{\Jbr{x}^{m} \pa_x^{\be_i+1} u }{\I}{2} \) \TxLebn{\Jbr{x}^m \pa_x^{2j-1} u}{\I}{2} \\
	\le{}& C\norm{u}_{X_T}^{n+1}.
\end{align*}
Combining these estimates, we find
\begin{align*}
	\ti{A}_k \le C \sum_{n=1}^k \l^{\a-n} \norm{u}_{X_T}^{n+1} \le C \l^{\a-1} \norm{u}_{X_T}^{2} + C \l^{\a-k} \norm{u}_{X_T}^{k+1}
\end{align*}
for any $1 \le k \le \ga$. These yield \eqref{lem:43}. This completes the proof.

\end{proof}

\begin{lemma} 
Let $m= \left[ \frac{1}{\a} \right]+1$. Denote $s$, $j  \in \Z^+$ with $s-j+1 \in \Z^+$. Put $q \in (1, 2]$ and $\ga \in \Z^+$ with $\ga \le 2j+2$. Then it holds that
\begin{align}
&{}	\begin{aligned}
	&{}\TxLebn{|u|^{\a} \pa_x^{2j-1} u - |v|^{\a} \pa_x^{2j-1} v}{1}{2} \\
	\le{}& CT \norm{u}_{X_T}^{\a} \norm{u-v}_{X_T}+C\l^{\a-1} T \norm{u}_{X_T} \norm{u-v}_{X_T},
	\end{aligned}
	\label{lem:50} \\
&{}	\begin{aligned}
	&{}\xTLebn{\partial_x^{s-j+1} (|u|^{\a} \pa_x^{2j-1} u - |v|^{\a} \pa_x^{2j-1} v)}{q}{2} \\
	\le{}& C \norm{u}_{X_T}^{\a} \norm{u-v}_{X_T} + C \l^{\a-1} \norm{u}_{X_T} \norm{u-v}_{X_T} \\
	&{}+ CT^{1/2} \( \l^{\a-(s-j+2)} + \l^{\a-(s-j+1)} \) \\
	&{} \quad \times \(\norm{u}_{X_T}^{s-j+2} + \norm{u}_{X_T}^{s-j+1} \) \norm{u-v}_{X_T} \\
	&{}+ CT^{1/2} \( \l^{\a-2} + \l^{\a-1} \) \( \norm{u}_{X_T}^{2} + \norm{u}_{X_T} \) \norm{u-v}_{X_T},
	\end{aligned}
\label{lem:51} \\
&{}	\begin{aligned}
	&{}\TxLebn{\Jbr{x}^m \pa_x^{\ga} \( |u|^{\a}\pa_x^{2j-1} u - |v|^{\a}\pa_x^{2j-1} v \)}{1}{2} \\
	\le{}& CT\la^{\a-1} \norm{u}_{X_T} \norm{u-v}_{X_T} \\
	&{}+ CT \( \la^{\a-\ga-1} + \la^{\a-\ga} \) \( \norm{u}_{X_T}^{\ga+1} + \norm{u}_{X_T}^{\ga} \) \norm{u-v}_{X_T} \\
	&{}+ CT \la^{\a-2} \norm{u}_{X_T}^2 \norm{u-v}_{X_T},
	\end{aligned}
\label{lem:52}
\end{align}
as long as $\Jbr{x}^m \min \( |u(t, x)|, |v(t, x)| \) \ge \frac{\l}{2}$ for any $(t,x) \in [0,T] \times \R$.
\end{lemma}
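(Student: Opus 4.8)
The plan is to mirror, difference by difference, the three estimates of Lemma~\ref{lem:4}, replacing each monomial bound by a telescoping estimate of the corresponding difference. The basic algebraic device is that for the $C^{\a}$ nonlinearity one has the pointwise bound
\[
	\bigl| |u|^{\a} - |v|^{\a} \bigr| \le C \bigl( |u|^{\a-1} + |v|^{\a-1} \bigr) |u-v|
\]
(valid on the region $\Jbr{x}^m|u|, \Jbr{x}^m|v| \ge \la/2$, where both $|u|^{\a-1}$ and $|v|^{\a-1}$ are comparable to $\la^{\a-1}\Jbr{x}^{m(1-\a)}$), and more generally, after applying the Leibniz rule, each term $\pa_x^k(|u|^{\a})\pa_x^{\b_0}u - \pa_x^k(|v|^{\a})\pa_x^{\b_0}v$ splits as $\pa_x^k(|u|^{\a})\pa_x^{\b_0}(u-v) + \bigl(\pa_x^k(|u|^{\a}) - \pa_x^k(|v|^{\a})\bigr)\pa_x^{\b_0}v$. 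Schematically $ab - a'b' = a(b-b') + (a-a')b'$, iterated over the $n$ factors $\pa_x^{\b_i}u$ and the scalar prefactor $|u|^{\a-2n}u^n$. Differences of the derivative factors $\pa_x^{\b_i}u - \pa_x^{\b_i}v = \pa_x^{\b_i}(u-v)$ are handled exactly by the interpolation inequality Lemma~\ref{lem:0} applied to $u-v$, exactly as in Lemma~\ref{lem:4}; differences of the prefactor $|u|^{\a-2n}u^n - |v|^{\a-2n}v^n$ are $C^{\a}$-type and produce the extra $\la^{\a-n-1}$ (rather than $\la^{\a-n}$) power that accounts for the shifted exponents in \eqref{lem:51} and \eqref{lem:52}.

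Concretely, for \eqref{lem:50} I would write $|u|^{\a}\pa_x^{2j-1}u - |v|^{\a}\pa_x^{2j-1}v = |u|^{\a}\pa_x^{2j-1}(u-v) + (|u|^{\a}-|v|^{\a})\pa_x^{2j-1}v$, bound the first term by Sobolev embedding as in \eqref{lem:40} (giving $CT\norm{u}_{X_T}^{\a}\norm{u-v}_{X_T}$), and bound the second using the pointwise $C^{\a}$ estimate together with $\Jbr{x}^m|u|,\Jbr{x}^m|v|\ge\la/2$; the weight bookkeeping $\Jbr{x}^{m(1-\a)}\cdot\Jbr{x}^{-m}$ absorbs into $\Jbr{x}^{m}$-norms of $v$ and gives $CT\norm{u}_{X_T}\norm{u-v}_{X_T}$ after noting $\la^{\a-1}$ is $\le$ a constant times $\norm{v}_{X_T}^{\a-1}$ on this region (or simply keeping the $\norm{u}_{X_T}$ form as stated). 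For \eqref{lem:51} and \eqref{lem:52} I would repeat verbatim the term-by-term analysis of the proof of \eqref{lem:42} and \eqref{lem:43}, at each occurrence of a factor applying the two-term splitting above: the ``$u$-part'' reproduces the original bound with one factor of $\norm{u}_{X_T}$ downgraded to $\norm{u-v}_{X_T}$, and the ``difference-of-prefactor part'' costs an extra inverse power of $\la$ and an extra $\norm{u}_{X_T}$ (or $\norm{v}_{X_T}$, hence the symmetric sums $\norm{u}_{X_T}^{k+1}+\norm{u}_{X_T}^{k}$, etc. appearing on the right-hand sides). The choice of exponents $\t_i$, the verification $\t_0+\cdots+\t_n\ge n-\a$, and the regularity threshold $s-1\ge \tfrac{2}{\a}j+3j$ are inherited unchanged from Lemma~\ref{lem:4}, since differencing does not alter the number of derivatives landing on any factor.

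The main obstacle, as in Lemma~\ref{lem:4}, is purely combinatorial bookkeeping: one must track, across the Leibniz expansion and the nested telescoping, which factor carries the difference $u-v$ and ensure that in every resulting term the weights $\Jbr{x}^{m\t_i}$ still sum correctly against the $\la^{\a-n}$ or $\la^{\a-n-1}$ prefactor, and that the lower-order terms from Lemma~\ref{lem:0} (applied now to $u-v$) are still controlled by $\norm{u-v}_{X_T}$ times powers of $\norm{u}_{X_T}$. No genuinely new analytic input is needed — the smoothing estimates of Lemma~\ref{lem:2}, the interpolation Lemma~\ref{lem:0}, and Gagliardo--Nirenberg are exactly the tools already used — so the proof is a careful but routine ``difference version'' of the preceding lemma, and I would present it by indicating the splitting, treating the representative term $\pa_x^k(|u|^{\a})\pa_x^{\b_0}u-\pa_x^k(|v|^{\a})\pa_x^{\b_0}v$ in detail, and remarking that the remaining terms follow identically.
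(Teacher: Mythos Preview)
Your proposal is correct and follows essentially the same route as the paper: the paper's proof also declares itself a ``difference version'' of Lemma~\ref{lem:4}, singles out the pointwise inequality $\bigl||u|^{\a-2k}u^k - |v|^{\a-2k}v^k\bigr| \le C(|u|^{\a-k-1}+|v|^{\a-k-1})|u-v|$ as the only new ingredient, and then sketches \eqref{lem:51} by the Leibniz expansion plus the telescoping $ab-a'b'=a(b-b')+(a-a')b'$ you describe, treating the extremal terms $B_0$ and $B_{s-j+1}$ in detail. Your identification of the extra $\la^{-1}$ arising from differencing the prefactor, and of the unchanged interpolation exponents $\t_i$, matches the paper's computation exactly.
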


\begin{proof}
For simplicity, we shall only consider real-valued functions. 
The proof  is similar to that of Lemma \ref{lem:4} except for employing
\begin{align}\label{di:1}
	\left| |u|^{\a-2k}u^k - |v|^{\a-2k}v^k \right| \le C \( |u|^{\a-k-1} + |v|^{\a-k-1} \)|u-v| 
\end{align}
for any $k \in \Z_{\ge 0}$, so we shall only sketch the proof of \eqref{lem:51}.
By the Leibniz rule, one has
\begin{align*}
	&{}\xTLebn{\partial_x^{s-j+1} \( |u|^{\a} \pa_x^{2j-1} u - |v|^{\a} \pa_x^{2j-1} v \)}{q}{2} \\
	\le{}& 	\sum_{k=0}^{s-j+1} \xTLebn{\pa_x^k (|u|^{\a}) \pa_x^{s+j-k} u - \pa_x^k (|v|^{\a}) \pa_x^{s+j-k} v}{q}{2} =: \sum_{k=0}^{s-j+1} B_k.
\end{align*}
Combining the H\"older inequality with $\Jbr{x}^m \min \( |u|, |v| \) \ge \frac{\l}{2}$ and \eqref{di:1}, it is deduced that
\begin{align*} 
	B_0 
	\le{}& C \xTLebn{|u|^{\a} \pa_x^{s+j}(u -v)}{q}{2} + C\xTLebn{(|u|^{\a-1} + |u|^{\a-1})|u-v| \pa_x^{s+j}v}{q}{2} \\
	\le{}& C \TxLebn{\Jbr{x}^m u}{\I}{\I}^{\a} \xTLebn{\pa_x^{s+j}(u-v)}{\I}{2}  \\
	&+ C \l^{\a-1} \TxLebn{ \Jbr{x}^m (u-v)}{\I}{\I}  \xTLebn{\pa_x^{s+j}v}{\I}{2}.
\end{align*}
The estimation of $B_k$ for $1 \le k \le s-j+1$ can be obtained in the similar way to the proof of \eqref{lem:42}. Hence we here only handle $B_{s-j+1}$.
Thanks to the H\"older inequality, one sees that   
\begin{align*}
	&{}B_{s-j+1} 
	\le CT^{1/2} \( \left\| |u|^{\a-2(s-j+1)} u^{s-j+1} \( \pa_x u \)^{s-j+1} \pa_x^{2j-1} u \right. \right. \\
	&{} \qquad \qquad \left. - |v|^{\a-2(s-j+1)} v^{s-j+1} \( \pa_x v \)^{s-j+1} \pa_x^{2j-1} v \right\|_{L^\I_T L^{q}_x} + \cdots \\
	&{}\quad \left. \cdots + \TxLebn{|u|^{\a-2}u \( \pa_x^{s-j+1} u\) \pa_x^{2j-1} u - |v|^{\a-2}v \( \pa_x^{s-j+1} v\) \pa_x^{2j-1} v}{\I}{q} \) \\
	=:{}& CT^{1/2} \( B_{s-j+1,1} + \cdots + B_{s-j+1, s-j+1} \).
\end{align*}
As in the proof of \eqref{lem:42}, the estimate of $B_{s-j+1, k}$ for $2 \le k \le s-j+1$ can be carried out, so we only give a proof of $B_{s-j+1, 1}$ and $B_{s-j+1, s-j+1}$.
By using the Gagliardo-Nirenberg inequality, \eqref{di:1} and $\Jbr{x}^m \min \( |u|, |v| \) \ge \frac{\l}{2}$, one has
\begin{align*}
	&{}B_{s-j+1,1} \\
	\le{}& \TxLebn{\( |u|^{\a-2(s-j+1)} u^{s-j+1} - |v|^{\a-2(s-j+1)} v^{s-j+1} \) \( \pa_x u \)^{s-j+1} \pa_x^{2j-1} u}{\I}{q} \\
	&{} + \TxLebn{|v|^{\a-(s-j+1)} \( \( \pa_x u \)^{s-j+1} - \( \pa_x v \)^{s-j+1} \) \pa_x^{2j-1} u}{\I}{q} \\
	&{} + \TxLebn{|v|^{\a-(s-j+1)} \( \pa_x v \)^{s-j+1} \pa_x^{2j-1} (u-v) }{\I}{q} \\
	\le{}& C \l^{\a-(s-j+2)} \( \TxLebn{\Jbr{x}^{m} \pa_x u}{\I}{2} + \TxLebn{\Jbr{x}^{m} \pa_x^2 u}{\I}{2} \)^{s-j+1} \\
	&{} \quad \times \TxLebn{\Jbr{x} (u-v) }{\I}{\I} \TxLebn{\pa_x^{2j-1} u}{\I}{2} \\
	&{}+ C \l^{\a-(s-j+1)} \( \TxLebn{\Jbr{x}^{m} \pa_x u}{\I}{2} + \TxLebn{\Jbr{x}^{m} \pa_x^2 u}{\I}{2} \right. \\
	&{} \left. \hspace{3.5cm} + \TxLebn{\Jbr{x}^{m} \pa_x v}{\I}{2} + \TxLebn{\Jbr{x}^{m} \pa_x^2 v}{\I}{2} \)^{s-j} \\
	&{} \quad \times \( \TxLebn{\Jbr{x}^{m} \pa_x (u-v)}{\I}{2} + \TxLebn{\Jbr{x}^{m} \pa_x^2 (u-v)}{\I}{2} \) \TxLebn{\pa_x^{2j-1} u}{\I}{2} \\
	&{}+ C \l^{\a-(s-j+1)} \( \TxLebn{\Jbr{x}^{m} \pa_x v}{\I}{2} + \TxLebn{\Jbr{x}^{m} \pa_x^2 v}{\I}{2} \)^{s-j+1} \\
	&{} \quad \times \TxLebn{\pa_x^{2j-1} (u-v)}{\I}{2}.
\end{align*}
Similarly to the above,
\begin{align*}
	&{}B_{s-j+1, s-j+1} \\
	\le {}& C \la^{\a-2} \TxLebn{\Jbr{x}^m (u-v)}{\I}{\I} \\
	&{} \quad \times \( \TxLebn{\Jbr{x}^{m} \pa_x^{2j-1} u}{\I}{2} + \TxLebn{\Jbr{x}^{m}\pa_x^{2j} u}{\I}{2} \) \TxLebn{\pa_x^{s-j+1} u}{\I}{2} \\
	&{}+ C \la^{\a-1} \( \TxLebn{\Jbr{x}^{m} \pa_x^{2j-1} u}{\I}{2} + \TxLebn{\Jbr{x}^{m}\pa_x^{2j} u}{\I}{2} \) \\
	&{} \quad \times \TxLebn{\pa_x^{s-j+1} (u-v)}{\I}{2} \\
	&{}+ C \la^{\a-1} \TxLebn{\pa_x^{s-j+1} v}{\I}{2} \\
	&{} \quad \times \( \TxLebn{\Jbr{x}^{m} \pa_x^{2j-1} (u-v)}{\I}{2} + \TxLebn{\Jbr{x}^{m}\pa_x^{2j} (u-v)}{\I}{2} \).
\end{align*}
Combining these estimates, we reach to \eqref{lem:51}. 

\end{proof}

\section{Proof of the main results. } \label{sec:4}

\begin{proof}[Proof of Theorem \ref{thm:1}]
To simplify the exposition, we shall only consider real-valued functions. Let us introduce the complete metric space
\begin{align*}
	X_{T,M} = {}&\left\{ u \in C([0,T], H^s(\R)); \right.
	\norm{u}_{X_{T}} := \norm{u}_{L^{\I}_T H^{s}} + \norm{\Jbr{x}^{m} u}_{L^{\I}_T L^\I_x} \\
	&{} \qquad + \sum_{\ga =1}^{2j+2} \norm{\Jbr{x}^{m} \pa^{\ga}_{x} u}_{L^{\I}_T L^2_x} + \sum_{l=0}^{j-1} \xTLebn{\pa_x^{s+j-l} u}{\I}{2} \le M, \\
	{}& \qquad \left. \sup_{0 \le t \le T} \Lebn{\Jbr{x}^m (u(t)-u_0)}{\I} \le \frac{\l}2 \right\}
\end{align*}
equipped with the distance function $d_{X_{T}}(u,v) = \norm{u-v}_{X_{T}}$.
Here the constant $M$ will be chosen later.
Notice that
\begin{align}
	\frac{\l}{2} \le \Jbr{x}^m |u(x,t)| \le \Jbr{x}^m |u_0(x)| + \frac{\l}{2} \label{cond:1}
\end{align}
for any $(x,t) \in \R \times [0,T]$ as long as $u \in X_{T, M}$.
Set 
\begin{align*}
	\P(u(t)) = U_j(t)u_0 \mp \int_0^t U_j(t-s)(|u|^{\a}\pa_x^{2j-1} u)(s) ds. 
\end{align*}
We will prove that $\P$ is a contraction map in $X_{T, M}$. 
Let us first show that $\P$ maps from $X_{T, M}$ to itself. 
By Lemma \ref{lem:2}, it holds that
\begin{align*}
	&{}\TxLebn{\pa_x^{s} \P(u)}{\I}{2} + \sum_{l=0}^{j-1} \xTLebn{\pa_x^{s+j-l} \P(u)}{\I}{2} \\
	\le {}&  C \sum_{l=0}^{j-1} \Lebn{\pa_x^{s-l} u_0}{2} + C \sum_{l=0}^{j-1} T^{\a_{j, l}} \xTLebn{\partial_x^{s-j+1} (|u|^{\a} \pa_x^{2j-1} u)}{p_{j, l}}{2} \\
	\le {}&  C \Sobn{u_0}{s} + C \( T^{\a_{j, 0}} + T^{\a_{j, j-1}} \) \\
	&{} \times \( \xTLebn{\partial_x^{s-j+1} (|u|^{\a} \pa_x^{2j-1} u)}{p_{j,0}}{2} \right. \\
	&{} \left. \hspace{3cm} + \xTLebn{\partial_x^{s-j+1} (|u|^{\a} \pa_x^{2j-1} u)}{p_{j,j-1}}{2}\),
\end{align*}
where $\a_{j, l} = \frac{l+1}{2j}$ and $p_{j, l} = \frac{2j}{2j-1-l}$.
By employing \eqref{lem:42} with $q= p_{j, 0}$ and $p_{j, j-1}$, we obtain
\begin{align}
	\begin{aligned}
	&{}\TxLebn{\pa_x^{s} \P(u)}{\I}{2} + \sum_{l=0}^{j-1} \xTLebn{\pa_x^{s+j-l} \P(u)}{\I}{2} \\
	\le{}& C \Sobn{u_0}{s} + CT^{\frac{1}{2j}} \( 1 + \la^{\a-(s-j+1)} \)(M^{\a+1} + M^{s-j+2})
	\end{aligned}
	\label{est:4}
\end{align} 
as long as $T \le 1$. One also sees from \eqref{lem:40} that
\begin{align}
	\TxLebn{\P(u)}{\I}{2} \le{}& \Lebn{u_0}{2} + CTM^{\a+1}. \label{est:2}
\end{align}

Let us next consider $\norm{\Jbr{x}^{m} \pa^{\ga}_x \P(u)}_{L^{\I}_T L^2_x}$ for any $1 \le \ga \le 2j+2$. It follows from Lemma \ref{lem:3} that
\begin{align}
\begin{aligned}
	&\TxLebn{\Jbr{x}^{m} \pa^{\ga}_x \P(u)}{\I}{2}\\
	\le{}& C \Lebn{\Jbr{x}^m \pa_x^{\ga} u_0}{2} \\
	&{}+ C \Jbr{T}^{m} \( \Lebn{\pa_x^{\ga} u_0}{2} + \Lebn{\pa_x^{2jm+{\ga}}u_0}{2} \) \\
	&{}+ C \TxLebn{\Jbr{x}^m \pa_x^{\ga} (|u|^{\a}\pa_x^{2j-1} u)}{1}{2} \\
	&{}+ C \Jbr{T}^m \( \TxLebn{\pa_x^{\ga} \( |u|^{\a}\pa_x^{2j-1} u \)}{1}{2} \right. \\
	&{} \hspace{3cm} \left. + \TxLebn{\pa_x^{2jm+\ga} \( |u|^{\a}\pa_x^{2j-1} u \)}{1}{2} \). 
\end{aligned}
	\label{esn:1}
\end{align}
Since $s - j +1 \ge 2jm+2j+2$, we deduce from $H^{s-j+1} \hookrightarrow \dot{H}^{\s}$ for $\s = \ga$, $2jm+\ga$ that
\begin{align*}
	&{}\TxLebn{\pa_x^{\ga} (|u|^{\a}\pa_x^{2j-1} u)}{1}{2} + \TxLebn{\pa_x^{2jm+\ga} (|u|^{\a}\pa_x^{2j-1} u)}{1}{2} \\
	\le{}& C \TxLebn{|u|^{\a}\pa_x^{2j-1} u}{1}{2} + CT^{1/2} \xTLebn{\pa_x^{s-j+1} (|u|^{\a}\pa_x^{2j-1} u)}{2}{2}.
\end{align*}
Applying \eqref{lem:40} and \eqref{lem:42} with $q = 2$, one has
\begin{align}
\begin{aligned}
	&{}\TxLebn{\pa_x^{\ga} (|u|^{\a}\pa_x^{2j-1} u)}{1}{2} + \TxLebn{\pa_x^{2jm+\ga} (|u|^{\a}\pa_x^{2j-1} u)}{1}{2} \\
	\le {}& CT^{1/2}\(1 + \la^{\a-(s-j+1)} \)\( M^{\a+1} + M^{s-j+2} \)
\end{aligned}
\label{est:in}
\end{align}
for any $T \le 1$. Further, it comes from \eqref{lem:43} that
\begin{align}
	\TxLebn{\Jbr{x}^m \pa_x^{\ga} (|u|^{\a}\pa_x^{2j-1} u)}{1}{2} \le CT\la^{\a-1} M^2 + CT\la^{\a-\ga}M^{\ga+1}. 
	\label{esn:2}
\end{align}
Hence, it is concluded from \eqref{esn:1}, \eqref{est:in} and \eqref{esn:2} that
\begin{align}
	\begin{aligned}
	&{}\norm{\Jbr{x}^{m} \pa^{\ga}_x \P(u)}_{L^{\I}_T L^2_x} \\
	\le{}& C \( \Lebn{\Jbr{x}^m \pa_x^{\ga} u_0}{2} +  \Lebn{\pa_x^{\ga} u_0}{2} + \Lebn{\pa_x^{2jm+\ga}u_0}{2} \) \\
	&{}+ CT^{1/2} \(1 + \la^{\a-(s-j+1)} \)\( M^{\a+1} + M^{s-j+2} \)
	\end{aligned}
	\label{est:3}
\end{align}
as long as $T \le 1$ for any $1 \le \ga \le 2j+2$. 

Let us next handle $\TxLebn{\Jbr{x}^m \P(u)}{\I}{\I}$.
Thanks to 
\[
\frac{d}{dt} U_j(t)u_0 = -\pa_x^{2j+1} U_j(t) u_0,
\]
combining Sobolev embedding with Lemma \ref{lem:3}, we obtain
\begin{align}
	\begin{aligned}
	{}&\TxLebn{\Jbr{x}^m (U_j(t)u_0 -u_0)}{\I}{\I} \\
	\le {}& \TxLebn{\int_0^t \Jbr{x}^m U_j(s) \pa_x^{2j+1} u_0\, ds}{\I}{\I} \\
	\le {}& CT \( \Lebn{\Jbr{x}^{m} \pa_x^{2j+1} u_0}{2} + \Lebn{\Jbr{x}^{m} \pa_x^{2j+2} u_0}{2} +\Sobn{u_0}{2jm+2j+2} \)
	\end{aligned}
	\label{est:5}
\end{align}
whenever $T \le 1$. Similarly to \eqref{est:5}, one has
\begin{align*}
	&{}\TxLebn{\Jbr{x}^m \int_0^t U_j(t-s) \( |u|^{\a}\pa_x^{2j-1} u\)(s) ds}{\I}{\I} \\
	\le {}& C\Jbr{T}^{m} \( \TxLebn{\Jbr{x}^{m} |u|^{\a}\pa_x^{2j-1} u}{1}{2} + \TxLebn{\Jbr{x}^{m} \pa_x \(|u|^{\a}\pa_x^{2j-1} u\)}{1}{2} \) \\
	&{} + C\Jbr{T}^{m} \( \TxLebn{|u|^{\a}\pa_x^{2j-1} u}{1}{2} + \TxLebn{ \pa_x^{2jm+1} \(|u|^{\a}\pa_x^{2j-1} u\)}{1}{2} \). 
\end{align*}
By means of \eqref{cond:1}, we here compute 
\begin{align*}
	\TxLebn{\Jbr{x}^{m} |u|^{\a}\pa_x^{2j-1} u}{1}{2} \le{}& T \TxLebn{\Jbr{x}^m u}{\I}{\I} \TxLebn{|u|^{\a-1} \pa_x^{2j-1} u}{\I}{2} \\
	\le{}& CT\la^{\a-1} \TxLebn{\Jbr{x}^m u}{\I}{\I} \TxLebn{\Jbr{x}^{m} \pa_x^{2j-1} u}{\I}{2} \\
	\le{}& CT\la^{\a-1} M^2.
\end{align*}
By \eqref{lem:43} with $\ga=1$, it is observed that
\begin{align*}
	\TxLebn{\Jbr{x}^{m} \pa_x (|u|^{\a}\pa_x^{2j-1} u)}{1}{2} 
	\le{}& CT \la^{\a-1}M^2.
\end{align*}
Further, arguing as in \eqref{est:in}, we deduce that 
\begin{align*}
	&{}\TxLebn{|u|^{\a}\pa_x^{2j-1} u}{1}{2} + \TxLebn{ \pa_x^{2jm+1} \(|u|^{\a}\pa_x^{2j-1} u\)}{1}{2} \\
	\le{}& CT^{1/2} \(1 + \la^{\a-(s-j+1)} \)\( M^{\a+1} + M^{s-j+2} \)
\end{align*}
for any $T \le 1$. Therefore it follows from these estimates that
\begin{align}
	\begin{aligned}
	&{}\TxLebn{\Jbr{x}^m \int_0^t U(t-s) \( |u|^{\a}\pa_x^{2j-1} u\)(s) ds}{\I}{\I} \\
	\le{}& CT^{1/2} \(1 + \la^{\a-(s-j+1)} \)\( M^{\a+1} + M^{s-j+2} \)
	\end{aligned}
	\label{est:6}
\end{align}
as long as $T \le 1$. Combining \eqref{est:5} with \eqref{est:6}, we see that
\begin{align}
	\begin{aligned}
	&{}\txLebn{\Jbr{x}^m \P(u)}{\I}{\I} \\
	\le{}& \Lebn{\Jbr{x}^{m}u_0}{\I} \\
	&{}+ CT \( \Lebn{\Jbr{x}^{m} \pa_x^{2j+1} u_0}{2} + \Lebn{\Jbr{x}^{m} \pa_x^{2j+2} u_0}{2} +\Sobn{u_0}{2jm+2j+2} \) \\
	&{}+ CT^{1/2} \(1 + \la^{\a-(s-j+1)} \)\( M^{\a+1} + M^{s-j+2} \)
	\end{aligned}
	\label{est:7}
\end{align}
whenever $T \le 1$.
By collecting \eqref{est:4}, \eqref{est:2}, \eqref{est:3}, and \eqref{est:7}, it is established that
\begin{align*}
	\norm{u}_{X_T} 
	\le{}& C_0(1+T) \d + CT^{\frac1{2j}} \(1 + \la^{\a-(s-j+1)} \)\( M^{\a+1} + M^{s-j+2} \) \\
	\le{}& C_1 \d + CT^{\frac1{2j}} \( \frac{1+\la}{\la} \)^{s-j+1-\a} (M^{\a+1} + M^{s-j+2}) 
\end{align*}
for any $T \le 1$, where $C_1 = 2C_0$. Therefore, taking $M = 2C_1 \d$, we have $\norm{u}_{X_T} \le M$
whenever $T = T(\d, \la; \a, s, j)$ satisfies
\begin{align}
	CT^{\frac1{2j}} \( \frac{1+\la}{\la} \)^{s-j+1-\a} \( \d^{\a} + \d^{s-j+1} \) \le 1. \label{ls:1}
\end{align}
Note that in view of $\d \ge \la$, $T \le 1$ holds as long as $T$ satisfies \eqref{ls:1}. 
Finally, combining \eqref{est:5} with \eqref{est:6}, when $T \le 1$, we easily have
\begin{align*}
	&{}\Lebn{\Jbr{x}^m (\P(u(t))-u_0)}{\I} \\
	\le{}& CT^{1/2}\d + CT^{1/2} \( \frac{1+\la}{\la} \)^{s-j+1-\a} (M^{\a+1} + M^{s-j+2}),
\end{align*}
which implies 
\[
	\sup_{0 \le t \le T} \Lebn{\Jbr{x}^m (\P(u(t))-u_0)}{\I} \le \frac{\l}{2}
\]
as long as $T = T(\d, \la; \a, s)$ satisfies
\begin{align}
	CT^{1/2} \d + CT^{1/2} \d \( \frac{1+\la}{\la} \)^{s-j+1-\a} ( \d^{\a} + \d^{s-j+1}) \le \frac{\l}{2} \label{time:2}
\end{align}
since $M = 2C_1 \d$.
Thus taking $T = T(\d, \la; \a, s, j)$ satisfying \eqref{ls:1} and \eqref{time:2}, $\P(u) \in X_{T,M}$ is valid.

Let us show $\P$ is a contraction map in $X_{T, M}$. The proof is very similar to the above proof, so we sketch the proof.
Arguing as in the proof of \eqref{est:4}, 
we deduce from \eqref{lem:51} that
\begin{align}
	\begin{aligned}
	&{}\TxLebn{\pa_x^{s} \( \P(u)-\P(v)\)}{\I}{2} + \sum_{l=0}^{j-1} \xTLebn{\pa_x^{s+j-l} \( \P(u)-\P(v)\)}{\I}{2} \\
	\le{}& CT^{\frac{1}{2j}} \( 1 + \la^{\a-(s-j+2)} \)(M^{\a} + M^{s-j+2}) d_{X_{T}}(u,v)
	\end{aligned}
	\label{estd:4}
\end{align} 
as long as $T \le 1$. Also, \eqref{lem:50} gives us 
\begin{align}
	\TxLebn{\P(u) - \P(v)}{\I}{2} \le{}& CT(M^{\a} +M) d_{X_{T}}(u,v). \label{estd:2}
\end{align}

We will turn to treat
\[
\norm{\Jbr{x}^{m} \pa^{\ga}_x \( \P(u) - \P(v) \)}_{L^{\I}_T L^2_x}
\] 
for any $1 \le \ga \le 2j+2$. It follows from Lemma \ref{lem:3} that
\begin{align}
\begin{aligned}
	&\TxLebn{\Jbr{x}^{m} \pa^{\ga}_x \( \P(u) - \P(v) \)}{\I}{2}\\
	\le{}& C \Jbr{T}^{m} \TxLebn{\Jbr{x}^m \pa_x^{\ga} (|u|^{\a}\pa_x^{2j-1} u - |v|^{\a}\pa_x^{2j-1} v)}{1}{2} \\
	&{}+ C \Jbr{T}^m \( \TxLebn{\pa_x^{\ga} \( |u|^{\a}\pa_x^{2j-1} u - |v|^{\a}\pa_x^{2j-1} v\)}{1}{2} \right. \\
	&{} \hspace{3cm} \left. + \TxLebn{\pa_x^{2jm+\ga} \( |u|^{\a}\pa_x^{2j-1} u  - |v|^{\a}\pa_x^{2j-1} v \)}{1}{2} \). 
\end{aligned}
	\label{esnd:1}
\end{align}
As for the first term of R.H.S in \eqref{esnd:1}, it follows from \eqref{lem:52} that
\begin{align}
	\begin{aligned}
	&{}\TxLebn{\Jbr{x}^m \pa_x^{\ga} \( |u|^{\a}\pa_x u - |v|^{\a}\pa_x v \)}{1}{2} \\
	\le{}& CT\( \la^{\a-2} + \la^{\a-1} \) (M + M^2) d_{X_{T}}(u,v) \\
	&{}+ CT\( \la^{\a-\ga} + \la^{\a-(\ga+1)} \) \( M^{\ga} + M^{\ga+1} \) d_{X_{T}}(u,v). 
	\end{aligned}
	\label{esnd:2}
\end{align}
Applying \eqref{lem:50} and \eqref{lem:51} with $q = 2$, together with $H^{s-j+1} \hookrightarrow \dot{H}^{\s}$ for $\s = \ga$, $2jm+\ga$, the last term can be estimated as follows:
\begin{align}
\begin{aligned}
&{}\TxLebn{\pa_x^{\ga} (|u|^{\a}\pa_x^{2j-1} u - |v|^{\a}\pa_x^{2j-1} v)}{1}{2} \\
	&{}+ \TxLebn{\pa_x^{2jm+\ga} \( |u|^{\a}\pa_x^{2j-1} u  - |v|^{\a}\pa_x^{2j-1} v \)}{1}{2} \\
	\le {}& CT^{1/2} \( 1 + \la^{\a-(s-j+2)} \)(M^{\a} + M^{s-j+2})d_{X_{T}}(u,v)
\end{aligned}
\label{estd:in}
\end{align}
for any $T \le 1$. 
Hence, we conclude from \eqref{esnd:1}, \eqref{esnd:2} and \eqref{estd:in} that
\begin{align}
	\begin{aligned}
	&{}\norm{\Jbr{x}^{m} \pa^{\ga}_x \( \P(u) - \P(u) \)}_{L^{\I}_T L^2_x} \\
	\le{}& CT^{1/2} \(1 + \la^{\a-(s-j+2)} \)\( M^{\a} + M^{s-j+2} \)d_{X_{T}}(u,v)
	\end{aligned}
	\label{estd:3}
\end{align}
as long as $T \le 1$ for any $1 \le \ga \le 2j+2$. 
Let us next estimate 
\[
	\TxLebn{\Jbr{x}^m \( \P(u)-\P(v)\)}{\I}{\I}.
\]
Arguing as in \eqref{est:6}, by Sobolev embedding and Lemma \ref{lem:3},
we are led to
\begin{align*}
	&{}\TxLebn{\Jbr{x}^m (\P(u)-\P(v))}{\I}{\I} \\
	\le{}& CT \Jbr{T}^{m} \TxLebn{\Jbr{x}^m (|u|^{\a}\pa_x^{2j-1} u - |v|^{\a}\pa_x^{2j-1} v)}{1}{2} \\
	&{}+ CT \Jbr{T}^{m} \TxLebn{\Jbr{x}^m \pa_x (|u|^{\a}\pa_x^{2j-1} u - |v|^{\a}\pa_x^{2j-1} v)}{1}{2} \\
	&{}+ C\Jbr{T}^{m} \TxLebn{|u|^{\a}\pa_x^{2j-1} u - |v|^{\a}\pa_x^{2j-1} v}{1}{2} \\
	&{}+ C\Jbr{T}^m \TxLebn{\pa_x^{2jm+1} \(|u|^{\a}\pa_x^{2j-1} u - |v|^{\a}\pa_x^{2j-1} v \)}{1}{2}.
\end{align*}
Thanks to \eqref{di:1} and \eqref{cond:1}, The first term of R.H.S in the above can be estimated as follows:
\begin{align*}
	&{}\TxLebn{\Jbr{x}^{m} \( |u|^{\a}\pa_x^{2j-1} u - |v|^{\a}\pa_x^{2j-1} v\)}{1}{2} \\
	\le{}& CT \la^{\a-1} \TxLebn{\Jbr{x}^m (u-v)}{\I}{\I} \TxLebn{\Jbr{x}^{m} \pa_x^{2j-1} u}{\I}{2} \\
	&{}+ CT \la^{\a-1} \TxLebn{\Jbr{x}^m u}{\I}{\I} \TxLebn{\Jbr{x}^{m} \pa_x^{2j-1} (u-v)}{\I}{2} \\
	\le{}& CT \la^{\a-1} M d_{X_{T}}(u,v).
\end{align*}
We then see from \eqref{lem:52} with $\ga =1$ from that
\begin{align*}
	&{}\TxLebn{\Jbr{x}^{m} \pa_x \( |u|^{\a}\pa_x^{2j-1} u - |v|^{\a}\pa_x^{2j-1} v\)}{\I}{2} \\
	\le{}& CT \( \l^{\a-1} + \l^{\a-2} \)(M + M^2) d_{X_{T}}(u,v).
\end{align*}
As in the proof of \eqref{estd:in}, one reaches to
\begin{align*}
	&{}\TxLebn{|u|^{\a}\pa_x^{2j-1} u - |v|^{\a}\pa_x^{2j-1} v}{1}{2} \\
	&{}+ \TxLebn{\pa_x^{2jm+1} \(|u|^{\a}\pa_x^{2j-1} u - |v|^{\a}\pa_x^{2j-1} v \)}{1}{2} \\
	\le{}& CT^{1/2} \( 1 + \la^{\a-(s-j+2)} \)(M^{\a} + M^{s-j+2})d_{X_{T}}(u,v)
\end{align*}
for any $T \le 1$. Therefore it is established from these estimates that
\begin{align}
	\begin{aligned}
	&{}\TxLebn{\Jbr{x}^m (\P(u)-\P(v))}{\I}{\I} \\
	\le{}& CT^{1/2} \(1 + \la^{\a-(s-j+2)} \)\( M^{\a} + M^{s-j+2} \) d_{X_{T}}(u,v)
	\end{aligned}
	\label{estd:6}
\end{align}
as long as $T \le 1$. 
In conclusion, combining \eqref{estd:4} with \eqref{estd:2}, \eqref{estd:3} and \eqref{estd:6}, we see that
\begin{align*}
	d_{X_T}(\P(u),\P(v)) 
	\le{}& CT^{\frac1{2j}} \(1 + \la^{\a-(s-j+2)} \)\( M^{\a} + M^{s-j+2} \) d_{X_T}(u,v) \\
	\le{}& CT^{\frac1{2j}} \( \frac{1+\la}{\la} \)^{s-j+2-\a} (M^{\a} + M^{s-j+2}) d_{X_T}(u,v) 
\end{align*}
for any $T \le 1$. Therefore, recalling $M = 2C_1 \d$, we have 
\begin{align*}
	d_{X_T}(\P(u),\P(v)) \le \frac12 d_{X_T}(u, v)
\end{align*}
whenever $T = T(\d, \la; \a, s, j)$ satisfies
\begin{align}
	CT^{\frac1{2j}} \( \frac{1+\la}{\la} \)^{s-j+2-\a} \( \d^{\a} + \d^{s-j+2} \) \le \frac12. \label{lsd:1}
\end{align}
Note that $T \le 1$ holds when $T$ satisfies \eqref{lsd:1}. 
This implies that $\P$ is a contraction map in $X_{T, M}$, that is, \eqref{hkdv} has a unique local solution in $X_{T, M}$. The remainder of the proof is standard, so we omit the detail.
\end{proof}

\begin{proof}[Proof of Corollary \ref{cor:1}]
Set
\[
	T_1^{\frac{1}{2j}} = \frac{C \l}{\d \( 1 + \( \d^{\a} + \d^{s-j+2} \) \( \frac{1+\la}{\la} \)^{s-j+2-\a} \)}
\]
for some $C>0$. Thanks to $\d^{\a} + \d^{s-j+1} \le 2 (\d^{\a} + \d^{s-j+2})$, since there exists $C \in (0,1)$ such that $T_1$ satisfies \eqref{ls:1}, \eqref{time:2} and \eqref{lsd:1}, it follows from Theorem \ref{thm:1} that \eqref{hkdv} has a unique solution in $X_{T_1, M}$. By the definition of $T_{\d, \l}$, we obtain $T_{\d, \l} \ge T_1$. This completes the proof.
\end{proof}

\subsection*{Acknowledgments} 
This work was supported by the Research Institute for Mathematical Sciences, a Joint Usage/Research Center located in Kyoto University.
The author is grateful to the referee for reading our manuscript carefully and giving useful suggestions.

\bibliographystyle{amsplain}

\begin{bibdiv}
\begin{biblist}

\bib{CaNa}{article}{
      author={Cazenave, Thierry},
      author={Naumkin, Ivan},
       title={Local existence, global existence, and scattering for the
  nonlinear {S}chr\"odinger equation},
        date={2017},
        ISSN={0219-1997},
     journal={Commun. Contemp. Math.},
      volume={19},
      number={2},
       pages={1650038, 20},
         url={https://doi.org/10.1142/S0219199716500383},
      review={\MR{3611666}},
}

\bib{GKK}{article}{
      author={Guo, Zihua},
      author={Kwak, Chulkwang},
      author={Kwon, Soonsik},
       title={Rough solutions of the fifth-order {K}d{V} equations},
        date={2013},
        ISSN={0022-1236},
     journal={J. Funct. Anal.},
      volume={265},
      number={11},
       pages={2791\ndash 2829},
         url={https://doi.org/10.1016/j.jfa.2013.08.010},
      review={\MR{3096990}},
}

\bib{KP1}{article}{
      author={Kenig, Carlos~E.},
      author={Pilod, Didier},
       title={Well-posedness for the fifth-order {K}d{V} equation in the energy
  space},
        date={2015},
        ISSN={0002-9947},
     journal={Trans. Amer. Math. Soc.},
      volume={367},
      number={4},
       pages={2551\ndash 2612},
         url={https://doi.org/10.1090/S0002-9947-2014-05982-5},
      review={\MR{3301874}},
}

\bib{KP2}{article}{
      author={Kenig, Carlos~E.},
      author={Pilod, Didier},
       title={Local well-posedness for the {K}d{V} hierarchy at high
  regularity},
        date={2016},
        ISSN={1079-9389},
     journal={Adv. Differential Equations},
      volume={21},
      number={9-10},
       pages={801\ndash 836},
         url={http://projecteuclid.org/euclid.ade/1465912584},
      review={\MR{3513119}},
}

\bib{KPV2}{article}{
      author={Kenig, Carlos~E.},
      author={Ponce, Gustavo},
      author={Vega, Luis},
       title={Oscillatory integrals and regularity of dispersive equations},
        date={1991},
        ISSN={0022-2518},
     journal={Indiana Univ. Math. J.},
      volume={40},
      number={1},
       pages={33\ndash 69},
         url={https://doi.org/10.1512/iumj.1991.40.40003},
      review={\MR{1101221}},
}

\bib{KPV}{article}{
      author={Kenig, Carlos~E.},
      author={Ponce, Gustavo},
      author={Vega, Luis},
       title={Well-posedness and scattering results for the generalized
  {K}orteweg-de {V}ries equation via the contraction principle},
        date={1993},
        ISSN={0010-3640},
     journal={Comm. Pure Appl. Math.},
      volume={46},
      number={4},
       pages={527\ndash 620},
         url={https://doi.org/10.1002/cpa.3160460405},
      review={\MR{1211741}},
}

\bib{KPV4}{article}{
      author={Kenig, Carlos~E.},
      author={Ponce, Gustavo},
      author={Vega, Luis},
       title={Higher-order nonlinear dispersive equations},
        date={1994},
        ISSN={0002-9939},
     journal={Proc. Amer. Math. Soc.},
      volume={122},
      number={1},
       pages={157\ndash 166},
         url={https://doi.org/10.2307/2160855},
      review={\MR{1195480}},
}

\bib{KPV3}{incollection}{
      author={Kenig, Carlos~E.},
      author={Ponce, Gustavo},
      author={Vega, Luis},
       title={On the hierarchy of the generalized {K}d{V} equations},
        date={1994},
   booktitle={Singular limits of dispersive waves ({L}yon, 1991)},
      series={NATO Adv. Sci. Inst. Ser. B Phys.},
      volume={320},
   publisher={Plenum, New York},
       pages={347\ndash 356},
      review={\MR{1321214}},
}

\bib{La}{article}{
      author={Lax, Peter~D.},
       title={Integrals of nonlinear equations of evolution and solitary
  waves},
        date={1968},
        ISSN={0010-3640},
     journal={Comm. Pure Appl. Math.},
      volume={21},
       pages={467\ndash 490},
         url={https://doi.org/10.1002/cpa.3160210503},
      review={\MR{0235310}},
}

\bib{LMP}{article}{
      author={Linares, Felipe},
      author={Miyazaki, Hayato},
      author={Ponce, Gustavo},
       title={On a class of solutions to the generalized {K}d{V} type
  equation},
        date={2019},
        ISSN={0219-1997},
     journal={Commun. Contemp. Math.},
      volume={21},
      number={7},
       pages={1850056, 21},
         url={https://doi.org/10.1142/S0219199718500566},
      review={\MR{4017781}},
}

\bib{LP}{book}{
      author={Linares, Felipe},
      author={Ponce, Gustavo},
       title={Introduction to nonlinear dispersive equations},
     edition={Second},
      series={Universitext},
   publisher={Springer, New York},
        date={2015},
        ISBN={978-1-4939-2180-5; 978-1-4939-2181-2},
         url={https://doi.org/10.1007/978-1-4939-2181-2},
      review={\MR{3308874}},
}

\bib{LPS}{article}{
      author={Linares, Felipe},
      author={Ponce, Gustavo},
      author={Santos, Gleison~N.},
       title={On a class of solutions to the generalized derivative
  {S}chr\"{o}dinger equations},
        date={2019},
        ISSN={1439-8516},
     journal={Acta Math. Sin. (Engl. Ser.)},
      volume={35},
      number={6},
       pages={1057\ndash 1073},
         url={https://doi.org/10.1007/s10114-019-7540-4},
      review={\MR{3952703}},
}

\bib{LPS2}{article}{
      author={Linares, Felipe},
      author={Ponce, Gustavo},
      author={Santos, Gleison~N.},
       title={On a class of solutions to the generalized derivative
  {S}chr\"{o}dinger equations {II}},
        date={2019},
        ISSN={0022-0396},
     journal={J. Differential Equations},
      volume={267},
      number={1},
       pages={97\ndash 118},
         url={https://doi.org/10.1016/j.jde.2019.01.004},
      review={\MR{3944267}},
}

\bib{M}{article}{
      author={Miyazaki, Hayato},
       title={Lower bound for the lifespan of solutions to the generalized
  {K}d{V} equation with low degree of nonlinearity},
          date={2020},
        ISSN={},
     journal={Advanced Studies in Pure Mathematics},
      volume={85},
      number={},
       pages={303 \ndash 313},
         url={https://doi.org/10.2969/aspm/08510303},
}

\bib{P}{article}{
      author={Ponce, Gustavo},
       title={Lax pairs and higher order models for water waves},
        date={1993},
        ISSN={0022-0396},
     journal={J. Differential Equations},
      volume={102},
      number={2},
       pages={360\ndash 381},
         url={https://doi.org/10.1006/jdeq.1993.1034},
      review={\MR{1216734}},
}

\end{biblist}
\end{bibdiv}

\end{document}